\documentclass[12pt]{article}

\usepackage{amsmath,amsthm,amssymb,amsfonts}
\usepackage{mathrsfs}
\usepackage{enumitem}
\usepackage[margin=2.5cm]{geometry}
\usepackage{graphicx}
\usepackage{tikz}
\usetikzlibrary{arrows.meta,positioning}
\usepackage{hyperref}
\hypersetup{colorlinks=true,linkcolor=blue,citecolor=blue,urlcolor=blue}

\theoremstyle{plain}
\newtheorem{theorem}{Theorem}[section]

\newtheorem{proposition}[theorem]{Proposition}
\newtheorem{corollary}[theorem]{Corollary}

\theoremstyle{definition}
\newtheorem{definition}[theorem]{Definition}
\newtheorem{example}[theorem]{Example}

\theoremstyle{remark}
\newtheorem{remark}[theorem]{Remark}

\newcommand{\aura}{\mathfrak{a}}
\newcommand{\bura}{\mathfrak{b}}
\newcommand{\cla}{\operatorname{cl}_{\aura}}
\newcommand{\clb}{\operatorname{cl}_{\bura}}
\newcommand{\inta}{\operatorname{int}_{\aura}}

\newcommand{\cl}{\operatorname{cl}}
\newcommand{\inte}{\operatorname{int}}
\newcommand{\taua}{\tau_{\aura}}
\newcommand{\taub}{\tau_{\bura}}
\newcommand{\R}{\mathbb{R}}
\newcommand{\N}{\mathbb{N}}

\newcommand{\powerset}{\mathcal{P}}
\newcommand{\da}{d_{\aura}}

\title{\textbf{Compactness and Connectedness in Aura Topological Spaces}}

\author{Ahu A\c{c}{\i}kg\"{o}z\\[6pt]
\small Department of Mathematics, Balikesir University,\\
\small Cagis Campus, 10145, Balikesir, Turkey\\
\small \texttt{ahuacikgoz@balikesir.edu.tr}}

\date{}

\begin{document}

\maketitle

\begin{abstract}
This is the second paper in a series on aura topological spaces $(X, \tau, \aura)$, where $\aura: X \to \tau$ is a scope function with $x \in \aura(x)$, introduced in \cite{Acikgoz2026aura}. We study covering and connectivity properties in this setting. Five compactness-type notions are defined ($\aura$-compact, $\aura$-Lindel\"{o}f, countably $\aura$-compact, $\aura$-sequentially compact, $\aura$-limit point compact) and their mutual relationships are determined. For transitive aura functions we obtain a concrete convergence criterion: $(x_n)$ converges to $x$ in $\taua$ if and only if $x_n \in \aura(x)$ eventually. We show that $\aura$-compact subsets of $\aura$-$T_2$ spaces are $\aura$-closed and that $\aura$-compactness is preserved under $\aura$-continuous surjections. On the connectivity side, $\aura$-connected, $\aura$-path connected, and $\aura$-locally connected spaces are introduced; $\aura$-components turn out to be $\aura$-closed, and they are $\aura$-open when the space is $\aura$-locally connected. We also construct subspace and product aura topologies. For products the inclusion chain $(\taua) \times (\taub) \subseteq \tau_{\aura \times \bura} \subseteq \tau_X \times \tau_Y$ is established, with equality on the left when both scope functions are transitive. A Tychonoff-type theorem for transitive aura spaces is proved. All implications are shown to be strict by means of counterexamples on finite spaces and on the real line.
\end{abstract}

\noindent\textbf{Keywords:} Aura topological space; $\aura$-compact; $\aura$-Lindel\"{o}f; $\aura$-connected; $\aura$-path connected; product aura topology; subspace aura topology; Tychonoff theorem.

\medskip

\noindent\textbf{2020 Mathematics Subject Classification:} 54A05, 54D10, 54D30, 54D05, 54B10.

\section{Introduction}

Compactness and connectedness are among the most basic properties one investigates after setting up a new topological framework. Since the pioneering work of Alexandroff and Urysohn \cite{Alexandroff1929} on compactness and Hausdorff's treatment of connectivity \cite{Hausdorff1914}, both concepts have been adapted to ideal topological spaces \cite{Jankovic1990,Newcomb1967}, primal spaces \cite{Acharjee2022,AlOmari2023}, Cs\'{a}sz\'{a}r's generalized topologies \cite{Csaszar2002}, and various other settings.

In \cite{Acikgoz2026aura} we introduced the \emph{aura topological space} $(X, \tau, \aura)$. The key ingredient is a scope function $\aura: X \to \tau$ with $x \in \aura(x)$; it attaches to every point a fixed open neighborhood and gives rise to the \v{C}ech closure operator $\cla(A) = \{x \in X : \aura(x) \cap A \neq \emptyset\}$ and the topology $\taua \subseteq \tau$ of $\aura$-open sets. That paper developed generalized open-set classes, continuity types, separation axioms, and an application to rough sets. Two natural questions were left open: What can be said about covering properties and connectivity in this framework?

The present paper answers both questions. Because $\taua$ is coarser than $\tau$, there are fewer open sets available for covers and more room for connected subsets. Concretely, $\aura$-compactness turns out to be weaker than classical compactness, while $\aura$-connectedness is easier to achieve. Both phenomena are governed by the choice of $\aura$, and this sensitivity to the scope function runs through all our results.

The paper proceeds as follows. After recalling the background in Section~2, we develop five compactness-type properties and their mutual relationships in Section~3, where we also give a hands-on convergence criterion for transitive aura spaces. Section~4 takes up $\aura$-connectedness, $\aura$-path connectedness, and $\aura$-local connectedness. In Section~5 we build subspace and product aura topologies; the main results here are an inclusion chain for product topologies and a Tychonoff-type theorem. Section~6 gathers concluding remarks and open problems.

\section{Preliminaries}

We recall the key concepts from \cite{Acikgoz2026aura} for the reader's convenience. Throughout this paper, $(X, \tau)$ denotes a topological space. For a subset $A$ of $X$, $\cl(A)$ and $\inte(A)$ denote the closure and interior of $A$ in $(X,\tau)$, respectively. The power set of $X$ is $\powerset(X)$.

\begin{definition}[\cite{Acikgoz2026aura}]\label{def:aura}
Let $(X, \tau)$ be a topological space. A function $\aura: X \to \tau$ is called a \textbf{scope function} (or \textbf{aura function}) if $x \in \aura(x)$ for every $x \in X$. The triple $(X, \tau, \aura)$ is called an \textbf{aura topological space} (an $\aura$-\textbf{space}).
\end{definition}

\begin{definition}[\cite{Acikgoz2026aura}]\label{def:aura-closure}
Let $(X, \tau, \aura)$ be an $\aura$-space. The \textbf{aura-closure} of $A \subseteq X$ is
\[
\cla(A) = \{x \in X : \aura(x) \cap A \neq \emptyset\}.
\]
The \textbf{aura-interior} of $A$ is
\[
\inta(A) = \{x \in A : \aura(x) \subseteq A\}.
\]
\end{definition}

\begin{theorem}[\cite{Acikgoz2026aura}]\label{thm:cech-recall}
For any $\aura$-space $(X, \tau, \aura)$:
\begin{enumerate}[label=(\alph*)]
    \item $\cla(\emptyset) = \emptyset$;
    \item $A \subseteq \cla(A)$ for all $A \subseteq X$;
    \item $A \subseteq B \implies \cla(A) \subseteq \cla(B)$;
    \item $\cla(A \cup B) = \cla(A) \cup \cla(B)$ for all $A, B \subseteq X$;
    \item $\cla$ is not idempotent in general.
\end{enumerate}
Hence $\cla$ is an additive \v{C}ech closure operator.
\end{theorem}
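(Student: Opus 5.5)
All five assertions follow directly from the definition of $\cla$ and the standing hypothesis $x \in \aura(x)$, so we verify them one at a time in the order stated. For (a), no set meets $\emptyset$, so the defining condition $\aura(x) \cap \emptyset \neq \emptyset$ never holds and $\cla(\emptyset) = \emptyset$. For (b), take $x \in A$. Since $x \in \aura(x)$, we get $x \in \aura(x) \cap A$, and hence $x \in \cla(A)$. For (c), suppose $A \subseteq B$ and $\aura(x) \cap A \neq \emptyset$. Then $\aura(x) \cap B \supseteq \aura(x) \cap A$ is nonempty as well, so $\cla(A) \subseteq \cla(B)$.

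For (d), we use the identity $\aura(x) \cap (A \cup B) = (\aura(x) \cap A) \cup (\aura(x) \cap B)$. The left side is nonempty exactly when at least one of the two pieces on the right is nonempty. Hence $x \in \cla(A \cup B)$ if and only if $x \in \cla(A)$ or $x \in \cla(B)$, which gives the equality. Properties (a), (b) and (d) are precisely the axioms of an additive \v{C}ech closure operator, and (c) also follows from (d). This yields the concluding sentence of the theorem.

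The only step needing a construction is (e), which asks for one counterexample to idempotence. We take $X = \{1,2,3\}$ with the topology $\tau = \{\emptyset, \{1,2\}, \{2,3\}, \{2\}, X\}$. This collection is closed under unions and intersections, so it is a topology. Define $\aura(1) = \{1,2\}$, $\aura(2) = \{2\}$ and $\aura(3) = \{2,3\}$. Each value is open and contains its point, so $\aura$ is a scope function.

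Now compute the closures of $A = \{1\}$. Only $\aura(1)$ meets $\{1\}$, so $\cla(\{1\}) = \{1\}$; this set does not show failure. Instead take $A = \{3\}$ and modify the scope function to $\aura(1) = X$, keeping $\aura(2) = \{2\}$ and $\aura(3) = \{2,3\}$; this is still a valid scope function. The sets meeting $\{3\}$ are $\aura(1)$ and $\aura(3)$, so $\cla(\{3\}) = \{1,3\}$. Next, $\cla(\{1,3\}) = \{1,3\}$, since $\aura(2) = \{2\}$ meets neither $1$ nor $3$, so this choice is also idempotent.

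The correct idea is a chain: we want $y \in \aura(x)$ and $z \in \aura(y)$, but $z \notin \aura(x)$. Take $X = \{1,2,3\}$ with the discrete topology, so every subset is open. Set $\aura(1) = \{1,2\}$, $\aura(2) = \{2,3\}$ and $\aura(3) = \{3\}$. Then
\[
\cla(\{3\}) = \{2,3\}, \qquad \cla(\{2,3\}) = \{1,2,3\} \neq \{2,3\}.
\]
So $\cla(\cla(\{3\})) \neq \cla(\{3\})$, and $\cla$ is not idempotent. Finding this non-transitive choice of $\aura$ is the main point of the whole theorem; every other step is a routine unpacking of the definition.
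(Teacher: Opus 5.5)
Your proof is correct. Items (a)--(d) are the routine unpackings of the definition of $\cla$, and the paper itself gives no proof here, since the theorem is quoted from the author's earlier paper. Your final example for (e) works: $\cla(\{3\})=\{2,3\}$ while $\cla(\{2,3\})=\{1,2,3\}$.

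In a write-up you should drop the two abandoned trial scope functions. Both are transitive, and, as recorded in Definition~\ref{def:transitive}, transitivity forces $\cla$ to be idempotent, so they could never have worked.

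Your ``chain'' heuristic is exactly right and can be made sharp: $\cla$ is idempotent if and only if $\aura$ is transitive. Suppose $y\in\aura(x)$ and $z\in\aura(y)$. Then $y\in\cla(\{z\})$, hence $x\in\cla(\cla(\{z\}))$, and idempotence gives $z\in\aura(x)$. Your counterexample is of the same non-transitive type as the scope function in Example~\ref{ex:subspace-strict}.
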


\begin{definition}[\cite{Acikgoz2026aura}]\label{def:aura-open}
A subset $A$ of an $\aura$-space $(X, \tau, \aura)$ is called \textbf{$\aura$-open} if $\aura(x) \subseteq A$ for every $x \in A$. The collection $\taua$ of all $\aura$-open sets forms a topology on $X$ with $\taua \subseteq \tau$.
\end{definition}

\begin{definition}[\cite{Acikgoz2026aura}]\label{def:aura-closed}
A subset $F$ of an $\aura$-space $(X, \tau, \aura)$ is called \textbf{$\aura$-closed} if $X \setminus F \in \taua$. Equivalently, $F$ is $\aura$-closed if and only if $\cla(F) \subseteq F$ (and hence $\cla(F) = F$ since $F \subseteq \cla(F)$ always holds).
\end{definition}

\begin{definition}[\cite{Acikgoz2026aura}]\label{def:transitive}
An $\aura$-space $(X, \tau, \aura)$ is called \textbf{transitive} if $y \in \aura(x)$ implies $\aura(y) \subseteq \aura(x)$ for all $x, y \in X$. When $\aura$ is transitive, $\mathcal{B}_\aura = \{\aura(x) : x \in X\}$ is a base for $\taua$, and $\cla$ is idempotent.
\end{definition}

\begin{definition}[\cite{Acikgoz2026aura}]\label{def:special-aura}
An aura function $\aura$ is called:
\begin{enumerate}[label=(\alph*)]
    \item \emph{trivial} if $\aura(x) = X$ for every $x \in X$;
    \item \emph{discrete} if $\aura(x) = \{x\}$ for every $x \in X$;
    \item \emph{symmetric} if $y \in \aura(x)$ implies $x \in \aura(y)$.
\end{enumerate}
\end{definition}

\begin{definition}[\cite{Acikgoz2026aura}]\label{def:aura-continuity}
Let $(X, \tau_X, \aura)$ and $(Y, \tau_Y, \bura)$ be $\aura$-spaces. A function $f: X \to Y$ is called \textbf{$\aura$-continuous} if $f^{-1}(V) \in \taua$ for every $V \in \taub$.
\end{definition}

\begin{definition}[\cite{Acikgoz2026aura}]\label{def:separation-recall}
An $\aura$-space $(X, \tau, \aura)$ is called:
\begin{enumerate}[label=(\alph*)]
    \item $\aura$-$T_0$ if for each pair of distinct points, there exists an $\aura$-open set containing one but not the other;
    \item $\aura$-$T_1$ if for each pair of distinct points $x \neq y$, there exist $\aura$-open sets $U, V$ with $x \in U$, $y \notin U$ and $y \in V$, $x \notin V$;
    \item $\aura$-$T_2$ (or $\aura$-Hausdorff) if for each pair of distinct points, there exist disjoint $\aura$-open sets separating them.
\end{enumerate}
\end{definition}

\section{Compactness in Aura Topological Spaces}

In this section, we introduce and study various compactness properties in aura topological spaces.

\subsection{$\aura$-Compactness}

\begin{definition}\label{def:a-compact}
Let $(X, \tau, \aura)$ be an $\aura$-space.
\begin{enumerate}[label=(\alph*)]
    \item A subset $A \subseteq X$ is called \textbf{$\aura$-compact} if every cover of $A$ by $\aura$-open sets (i.e., by members of $\taua$) has a finite subcover. The space $(X, \tau, \aura)$ is $\aura$-compact if $X$ is $\aura$-compact.
    \item A subset $A$ is called \textbf{$\aura$-Lindel\"{o}f} if every $\aura$-open cover of $A$ has a countable subcover.
    \item A subset $A$ is called \textbf{countably $\aura$-compact} if every countable $\aura$-open cover of $A$ has a finite subcover.
\end{enumerate}
\end{definition}

\begin{remark}\label{rem:compact-taua}
A subset $A$ is $\aura$-compact if and only if $A$ is compact as a subset of the topological space $(X, \taua)$. This observation allows us to transfer standard results about compact subsets from general topology to the aura setting, while the additional structure of the scope function provides new characterizations.
\end{remark}

\begin{theorem}\label{thm:compact-implies-a-compact}
Let $(X, \tau, \aura)$ be an $\aura$-space. If $A \subseteq X$ is compact in $(X, \tau)$, then $A$ is $\aura$-compact.
\end{theorem}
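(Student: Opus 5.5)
The plan is to use the inclusion $\taua \subseteq \tau$ recorded in Definition~\ref{def:aura-open}, together with Remark~\ref{rem:compact-taua}. Any cover of $A$ by $\aura$-open sets is then automatically a cover of $A$ by $\tau$-open sets. Compactness of $A$ in $(X,\tau)$ therefore hands us a finite subcover directly. Equivalently, the identity map $(X,\tau)\to(X,\taua)$ is continuous because $\taua$ is coarser, and continuous images of compact sets are compact.

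Concretely, I would proceed as follows. Let $\{U_i\}_{i\in I}$ be a family of $\aura$-open sets with $A\subseteq\bigcup_{i\in I}U_i$. By Definition~\ref{def:aura-open} each $U_i$ lies in $\taua\subseteq\tau$, so $\{U_i\}_{i\in I}$ is a $\tau$-open cover of $A$. Since $A$ is compact in $(X,\tau)$, there are indices $i_1,\dots,i_n$ with $A\subseteq U_{i_1}\cup\dots\cup U_{i_n}$. This finite subfamily consists of $\aura$-open sets, so it is the required finite subcover. Hence $A$ is $\aura$-compact by Definition~\ref{def:a-compact}(a).

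The only point needing any care is the inclusion $\taua\subseteq\tau$. It is taken from \cite{Acikgoz2026aura}, but it can be checked directly. If $A$ is $\aura$-open, then $A=\bigcup_{x\in A}\aura(x)$: indeed $x\in\aura(x)\subseteq A$ for each $x\in A$. Each $\aura(x)$ lies in $\tau$ because $\aura$ maps into $\tau$, so $A$ is a union of $\tau$-open sets and hence lies in $\tau$. I expect no real obstacle beyond this. I would add that the converse fails in general, for instance with the trivial aura function on a non-compact space, where $\taua=\{\emptyset,X\}$; this is presumably addressed by a later example.
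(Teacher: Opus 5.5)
Your proof is correct and follows essentially the paper's argument: an $\aura$-open cover of $A$ is already a $\tau$-open cover because $\taua\subseteq\tau$, so compactness of $A$ in $(X,\tau)$ gives the finite subcover. Your direct check that each $\aura$-open set equals the union of the $\tau$-open sets $\aura(x)$ over its points makes the cited inclusion self-contained, though the paper simply quotes it.
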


\begin{proof}
Let $\{U_\alpha\}_{\alpha \in \Lambda}$ be an $\aura$-open cover of $A$. Since $\taua \subseteq \tau$, each $U_\alpha \in \tau$, so $\{U_\alpha\}_{\alpha \in \Lambda}$ is also a $\tau$-open cover of $A$. By compactness of $A$ in $(X, \tau)$, there exist $\alpha_1, \ldots, \alpha_n \in \Lambda$ such that $A \subseteq U_{\alpha_1} \cup \cdots \cup U_{\alpha_n}$.
\end{proof}

\begin{theorem}\label{thm:converse-fails}
The converse of Theorem~\ref{thm:compact-implies-a-compact} is false in general.
\end{theorem}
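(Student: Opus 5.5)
The plan is to exhibit one explicit counterexample: an $\aura$-space $(X,\tau,\aura)$ and a subset $A$ (we will take $A = X$) that is $\aura$-compact but not compact in $(X,\tau)$. The cleanest choice is $X = \R$ with its usual topology $\tau$ and the trivial aura function $\aura(x) = \R$ for every $x \in \R$ (Definition~\ref{def:special-aura}(a)). This is a legitimate scope function, since $\R \in \tau$ and $x \in \R$.

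First, we compute $\taua$. If $A \neq \emptyset$ is $\aura$-open and $x \in A$, then $\aura(x) = \R \subseteq A$, so $A = \R$. Hence $\taua = \{\emptyset, \R\}$ is the indiscrete topology. Next, we check that $\R$ is $\aura$-compact. Any $\aura$-open cover of $\R$ must contain the set $\R$ itself, because the only other member of $\taua$ is $\emptyset$. That single member is a finite subcover. Equivalently, by Remark~\ref{rem:compact-taua}, every indiscrete space is compact.

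Finally, we recall that $\R$ is not compact in the usual topology. The $\tau$-open cover $\{(-n,n) : n \in \N\}$ has no finite subcover, because any finite subfamily is contained in some $(-N,N)$ and therefore misses $N$. So $\R$ is $\aura$-compact but not $\tau$-compact, which shows that the converse of Theorem~\ref{thm:compact-implies-a-compact} fails.

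There is no real obstacle here; the only step needing care is the computation of $\taua$. If one wants a less degenerate example, one can take a non-trivial $\aura$ such as $\aura(x) = (-\infty, x+1)$. This $\aura$ is transitive with base of rays $(-\infty, c)$, and any cover by such rays of $\R$ is non-compact. So the trivial aura function is the safe choice, and I would use it in the proof.
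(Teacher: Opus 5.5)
Your proof is correct and is exactly the paper's argument: the trivial aura on $(\R,\tau_u)$ makes $\taua=\{\emptyset,\R\}$, so $\R$ is $\aura$-compact but not compact in the usual topology. Your closing aside is wrong, though the proof does not use it. The function $\aura(x)=(-\infty,x+1)$ is not transitive: $\tfrac12\in\aura(0)$ but $\aura(\tfrac12)\not\subseteq\aura(0)$. In fact this $\aura$ also forces $\taua=\{\emptyset,\R\}$, so it would give a second counterexample rather than a non-compact one.
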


\begin{proof}
Consider $(\R, \tau_u)$ with the usual topology and define the trivial aura $\aura(x) = \R$ for all $x \in \R$. Then $\taua = \{\emptyset, \R\}$ is the indiscrete topology. The only $\aura$-open cover of $\R$ is $\{\R\}$, which is finite, so $\R$ is trivially $\aura$-compact. However, $\R$ is not compact in $(\R, \tau_u)$.
\end{proof}

\begin{theorem}[Finite Intersection Property]\label{thm:FIP}
An $\aura$-space $(X, \tau, \aura)$ is $\aura$-compact if and only if every family of $\aura$-closed sets with the finite intersection property (FIP) has nonempty intersection.
\end{theorem}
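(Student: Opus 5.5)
The plan is to reduce the statement to the classical FIP characterization of compactness in the topological space $(X,\taua)$, using Remark~\ref{rem:compact-taua}. I will also write out the complementation argument directly, so that the proof stays self-contained in aura language. The only tools needed are Definition~\ref{def:aura-closed} (a set is $\aura$-closed exactly when its complement lies in $\taua$) and De Morgan's laws.

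For the forward direction, assume $X$ is $\aura$-compact and let $\{F_\alpha\}_{\alpha\in\Lambda}$ be a family of $\aura$-closed sets with the FIP. Suppose, for contradiction, that $\bigcap_\alpha F_\alpha=\emptyset$. Then the sets $U_\alpha = X\setminus F_\alpha$ belong to $\taua$, and by De Morgan they cover $X$. By $\aura$-compactness there are finitely many indices $\alpha_1,\dots,\alpha_n$ with $X=U_{\alpha_1}\cup\cdots\cup U_{\alpha_n}$. Complementing gives $F_{\alpha_1}\cap\cdots\cap F_{\alpha_n}=\emptyset$, which contradicts the FIP.

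For the converse, assume every family of $\aura$-closed sets with the FIP has nonempty intersection, and let $\{U_\alpha\}_{\alpha\in\Lambda}$ be an $\aura$-open cover of $X$. Suppose no finite subfamily covers $X$. Then the complements $F_\alpha=X\setminus U_\alpha$ are $\aura$-closed, and every finite intersection $F_{\alpha_1}\cap\cdots\cap F_{\alpha_n}=X\setminus(U_{\alpha_1}\cup\cdots\cup U_{\alpha_n})$ is nonempty, so the family has the FIP. By hypothesis $\bigcap_\alpha F_\alpha\neq\emptyset$. But $\bigcap_\alpha F_\alpha = X\setminus\bigcup_\alpha U_\alpha=\emptyset$, a contradiction.

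There is no genuine obstacle here. The only points needing care are the following:
\begin{itemize}
\item Complements of $\aura$-open sets are precisely the $\aura$-closed sets. This holds by definition, since $\taua$ is a topology (Definition~\ref{def:aura-open}).
\item The degenerate cases are harmless. If $X=\emptyset$, or the family is empty, the conventions $\bigcap\emptyset = X$ and ``the empty cover is finite'' make both sides hold vacuously.
\end{itemize}
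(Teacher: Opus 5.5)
Your proof is correct and follows the paper's route: the paper simply cites the classical FIP characterization of compactness for the topological space $(X,\taua)$, and you have written out the standard De Morgan argument explicitly. One small caveat is that your claim that the case $X=\emptyset$ is vacuous only holds if the empty subfamily (whose intersection is $X=\emptyset$) counts in the definition of FIP; under the usual ``nonempty finite subfamilies'' convention the empty family of $\aura$-closed sets has the FIP but empty intersection, so one tacitly assumes $X\neq\emptyset$.
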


\begin{proof}
This follows from the standard equivalence applied to the topological space $(X, \taua)$: $X$ is compact in $\taua$ if and only if every family of $\taua$-closed sets with the FIP has nonempty intersection.
\end{proof}

\begin{theorem}\label{thm:a-compact-a-closed}
Let $(X, \tau, \aura)$ be an $\aura$-$T_2$ space. If $A$ is $\aura$-compact, then $A$ is $\aura$-closed.
\end{theorem}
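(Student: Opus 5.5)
The plan is to reduce the statement to the classical fact that compact subsets of Hausdorff spaces are closed, applied to the topological space $(X,\taua)$. By Remark~\ref{rem:compact-taua}, $A$ is compact as a subset of $(X,\taua)$. The $\aura$-$T_2$ hypothesis in Definition~\ref{def:separation-recall}(c) says exactly that $(X,\taua)$ is Hausdorff. By Definition~\ref{def:aura-closed}, $A$ is $\aura$-closed precisely when $X\setminus A\in\taua$, i.e.\ when $A$ is closed in $(X,\taua)$. So the statement is a direct translation, and the proof can either cite the classical result or repeat its argument in aura language. I would repeat the argument, to keep the paper self-contained.

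Fix $y\in X\setminus A$. For each $a\in A$ we have $a\neq y$, so by $\aura$-$T_2$ there are disjoint $\aura$-open sets $U_a\ni a$ and $V_a\ni y$. The family $\{U_a\}_{a\in A}$ is an $\aura$-open cover of $A$. By $\aura$-compactness it has a finite subcover $U_{a_1},\dots,U_{a_n}$. Put $V=V_{a_1}\cap\cdots\cap V_{a_n}$. Since $\taua$ is a topology (Definition~\ref{def:aura-open}), $V$ is $\aura$-open. It contains $y$ and is disjoint from $U_{a_1}\cup\cdots\cup U_{a_n}\supseteq A$. Hence $V\subseteq X\setminus A$.

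It follows that $X\setminus A$ is a union of $\aura$-open sets, so it lies in $\taua$. Equivalently, in the pointwise form, $V$ is $\aura$-open and contains $y$, so $\aura(y)\subseteq V\subseteq X\setminus A$. Then $\aura(y)\cap A=\emptyset$, so $y\notin\cla(A)$, giving $\cla(A)\subseteq A$. Therefore $A$ is $\aura$-closed.

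There is no real obstacle here. The only points to check are the edge case $A=\emptyset$, which is trivially $\aura$-closed, and the use of closure of $\taua$ under finite intersections, which is recorded in Definition~\ref{def:aura-open}.
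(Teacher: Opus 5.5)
Your proposal is correct and follows the paper's proof essentially verbatim. Both fix a point $y\in X\setminus A$, separate it from each point of $A$ by disjoint $\aura$-open sets, and extract a finite subcover of $A$. Intersecting the corresponding finitely many neighbourhoods of $y$ then gives an $\aura$-open set that contains $\aura(y)$ and misses $A$.

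You also correctly observe that the statement is just ``compact subsets of Hausdorff spaces are closed'' applied to $(X,\taua)$, and your handling of the trivial case $A=\emptyset$ is sound.
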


\begin{proof}
We show that $X \setminus A$ is $\aura$-open. Let $x \in X \setminus A$. For each $y \in A$, since $x \neq y$ and $X$ is $\aura$-$T_2$, there exist disjoint $\aura$-open sets $U_y$ and $V_y$ with $x \in U_y$ and $y \in V_y$. The collection $\{V_y\}_{y \in A}$ is an $\aura$-open cover of $A$. By $\aura$-compactness, there exist $y_1, \ldots, y_n$ such that $A \subseteq V_{y_1} \cup \cdots \cup V_{y_n}$. Let $U = U_{y_1} \cap \cdots \cap U_{y_n}$. Then $U$ is $\aura$-open, $x \in U$, and $U \cap (V_{y_1} \cup \cdots \cup V_{y_n}) = \emptyset$, so $U \cap A = \emptyset$, i.e., $U \subseteq X \setminus A$. Since $U$ is $\aura$-open and $x \in U$, we have $\aura(x) \subseteq U \subseteq X \setminus A$ (note: $U \in \taua$ means $\aura(z) \subseteq U$ for all $z \in U$, in particular $\aura(x) \subseteq U$). Hence $X \setminus A$ is $\aura$-open.
\end{proof}

\begin{theorem}\label{thm:continuous-preserves-compact}
Let $f: (X, \tau_X, \aura) \to (Y, \tau_Y, \bura)$ be an $\aura$-continuous surjection. If $X$ is $\aura$-compact, then $Y$ is $\bura$-compact.
\end{theorem}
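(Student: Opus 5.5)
The plan is the standard pullback argument for continuous images of compact spaces, carried out in $(X,\taua)$ and $(Y,\taub)$. By Remark~\ref{rem:compact-taua}, $\aura$-compactness of $X$ is just compactness of the topological space $(X,\taua)$, and $\bura$-compactness of $Y$ is compactness of $(Y,\taub)$. Moreover, Definition~\ref{def:aura-continuity} says precisely that $f\colon (X,\taua)\to (Y,\taub)$ is continuous in the ordinary sense. The statement therefore reduces to the classical fact that a continuous surjection maps a compact space onto a compact space. I will nevertheless write out the direct argument so that the proof stays self-contained in the aura language.

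First I fix an arbitrary $\bura$-open cover $\{V_\alpha\}_{\alpha\in\Lambda}$ of $Y$, so each $V_\alpha\in\taub$. By $\aura$-continuity, each $f^{-1}(V_\alpha)\in\taua$. Since $Y=\bigcup_\alpha V_\alpha$, we get $X=f^{-1}(Y)=\bigcup_\alpha f^{-1}(V_\alpha)$. Hence $\{f^{-1}(V_\alpha)\}_{\alpha\in\Lambda}$ is an $\aura$-open cover of $X$.

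By $\aura$-compactness of $X$, I choose $\alpha_1,\dots,\alpha_n$ with $X=f^{-1}(V_{\alpha_1})\cup\cdots\cup f^{-1}(V_{\alpha_n})$. I then apply $f$ to both sides. Surjectivity gives $f(f^{-1}(V))=V$, and images commute with unions, so
\[
Y=f(X)=V_{\alpha_1}\cup\cdots\cup V_{\alpha_n}.
\]
This is a finite subcover, so $Y$ is $\bura$-compact.

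No step is a real obstacle. The only points needing care are these. Continuity must be understood with respect to $\taua$ and $\taub$, not $\tau_X,\tau_Y$, and Definition~\ref{def:aura-continuity} guarantees exactly that. Surjectivity is used only in the final identity $f(f^{-1}(V_{\alpha_i}))=V_{\alpha_i}$, together with $f(X)=Y$.
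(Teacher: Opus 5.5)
Your proof is correct and is essentially the paper's argument: you pull the $\bura$-open cover back to an $\aura$-open cover of $X$ via $\aura$-continuity, extract a finite subcover by $\aura$-compactness, and push it forward using surjectivity. Your remark that surjectivity is needed only for $f(f^{-1}(V_{\alpha_i}))=V_{\alpha_i}$ and $f(X)=Y$ is accurate and slightly sharper than the paper, which unnecessarily cites surjectivity to show that the preimages cover $X$.
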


\begin{proof}
Let $\{V_\alpha\}_{\alpha \in \Lambda}$ be a $\bura$-open cover of $Y$. Since $f$ is $\aura$-continuous, $f^{-1}(V_\alpha) \in \taua$ for each $\alpha$. The collection $\{f^{-1}(V_\alpha)\}_{\alpha \in \Lambda}$ covers $X$ (since $f$ is surjective and $\{V_\alpha\}$ covers $Y$). By $\aura$-compactness of $X$, there exist $\alpha_1, \ldots, \alpha_n$ such that $X = f^{-1}(V_{\alpha_1}) \cup \cdots \cup f^{-1}(V_{\alpha_n})$. Since $f$ is surjective, $Y = f(X) = V_{\alpha_1} \cup \cdots \cup V_{\alpha_n}$.
\end{proof}

\begin{theorem}\label{thm:compact-chain}
For any $\aura$-space $(X, \tau, \aura)$:
\[
\text{compact in } (X,\tau) \implies \text{$\aura$-compact} \implies \text{countably $\aura$-compact}.
\]
Moreover, $\aura$-compact $\implies$ $\aura$-Lindel\"{o}f.
\end{theorem}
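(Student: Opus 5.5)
The statement breaks into three implications, and each follows directly from the definitions in Definition~\ref{def:a-compact} together with earlier results.

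The first implication, compact in $(X,\tau)$ $\implies$ $\aura$-compact, is exactly Theorem~\ref{thm:compact-implies-a-compact} applied with $A = X$. I will simply cite it, recalling that it rests on $\taua \subseteq \tau$: every $\aura$-open cover is in particular a $\tau$-open cover.

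For $\aura$-compact $\implies$ countably $\aura$-compact, I take a countable $\aura$-open cover $\{U_n\}_{n \in \N}$ of $X$. It is in particular an $\aura$-open cover, so $\aura$-compactness gives a finite subcover. That is precisely the requirement for countable $\aura$-compactness, because the definition only restricts which covers must be tested and asks for the same conclusion, a finite subcover.

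For $\aura$-compact $\implies$ $\aura$-Lindel\"{o}f, I take an arbitrary $\aura$-open cover of $X$. $\aura$-compactness yields a finite subcover, and a finite family is countable, so this is a countable subcover.

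None of these steps is a genuine obstacle. The only point needing care is the quantifier structure in the middle implication: countable $\aura$-compactness asks for less, so it is implied, not the reverse. I will also note that all three arguments work verbatim for arbitrary subsets $A \subseteq X$, not just for $X$. By Remark~\ref{rem:compact-taua}, they are the standard facts for the topological space $(X,\taua)$.
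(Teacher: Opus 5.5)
Your proposal is correct and matches the paper's proof: you cite Theorem~\ref{thm:compact-implies-a-compact} for the first implication, and the other two are immediate from the definitions. A countable $\aura$-open cover is in particular an $\aura$-open cover, and a finite subcover is countable.
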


\begin{proof}
The first implication is Theorem~\ref{thm:compact-implies-a-compact}. The second and third are immediate from the definitions.
\end{proof}

\begin{example}\label{ex:countably-not-compact}
Let $X = \N$ with the discrete topology and define $\aura(n) = \{n, n+1\}$ for all $n \in \N$ (where we set $\aura(n) = \{n\}$ if $n$ is the maximum, or more precisely work with $\N = \{1,2,3,\ldots\}$ and set $\aura(n) = \{n, n+1\}$). Then each $\{n, n+1\}$ is open in the discrete topology. We have $\taua \subsetneq \tau$. The $\aura$-open sets are those $A$ such that $n \in A$ implies $\{n, n+1\} \subseteq A$, equivalently $n+1 \in A$. So $A \in \taua$ implies $A = \emptyset$, or $A$ is an ``upper set'' of the form $\{k, k+1, k+2, \ldots\}$ for some $k$, or $A = \N$.

A cover of $\N$ by $\aura$-open sets includes $\N$ itself (which suffices), so any $\aura$-open cover of $\N$ that includes $\N$ has a trivial finite subcover. However, $\{\{n, n+1, n+2, \ldots\} : n \in \N\}$ is an $\aura$-open cover with no finite subcover (since the intersection of finitely many such sets is $\{N, N+1, \ldots\}$ for some $N$, which misses $\{1, \ldots, N-1\}$). Hence $\N$ is not $\aura$-compact in this setting.
\end{example}

\subsection{$\aura$-Derived Sets and $\aura$-Limit Point Compactness}

\begin{definition}\label{def:a-limit-point}
Let $(X, \tau, \aura)$ be an $\aura$-space and $A \subseteq X$. A point $x \in X$ is called an \textbf{$\aura$-limit point} (or \textbf{$\aura$-accumulation point}) of $A$ if $\aura(x) \cap (A \setminus \{x\}) \neq \emptyset$. The set of all $\aura$-limit points of $A$ is called the \textbf{$\aura$-derived set} and is denoted by $\da(A)$.
\end{definition}

\begin{theorem}\label{thm:derived-closure}
For any $\aura$-space $(X, \tau, \aura)$ and $A \subseteq X$:
\[
\cla(A) = A \cup \da(A).
\]
\end{theorem}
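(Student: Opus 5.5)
The plan is a direct double inclusion, working only from Definition~\ref{def:aura-closure} of $\cla$ and Definition~\ref{def:a-limit-point} of $\da$. No earlier theorem beyond Theorem~\ref{thm:cech-recall}(b) should be needed. In particular, we never pass through the topology $\taua$. The point is that $\cla(A)$ is defined pointwise by the condition $\aura(x)\cap A\neq\emptyset$, so $\cla$ is not the closure operator of $\taua$ in general, and the identity must be proved at the level of the \v{C}ech operator itself.

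\textbf{The inclusion $A \cup \da(A) \subseteq \cla(A)$.}
\begin{enumerate}[label=(\roman*)]
    \item By Theorem~\ref{thm:cech-recall}(b), $A \subseteq \cla(A)$.
    \item Let $x \in \da(A)$. Then $\aura(x) \cap (A\setminus\{x\}) \neq \emptyset$.
    \item Since $A\setminus\{x\} \subseteq A$, this gives $\aura(x)\cap A \neq \emptyset$, so $x \in \cla(A)$.
\end{enumerate}

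\textbf{The inclusion $\cla(A) \subseteq A \cup \da(A)$.} Take $x \in \cla(A)$, so $\aura(x)\cap A\neq\emptyset$, and split into two cases.
\begin{enumerate}[label=(\roman*)]
    \item If $x \in A$, there is nothing to prove.
    \item If $x \notin A$, then $A\setminus\{x\} = A$. Hence $\aura(x)\cap(A\setminus\{x\}) = \aura(x)\cap A \neq \emptyset$, so $x \in \da(A)$.
\end{enumerate}

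There is no genuine obstacle here. The only point requiring care is the case split on whether $x \in A$. This is needed because a point of $A$ whose aura meets $A$ only in $x$ itself belongs to $\cla(A)$ but not to $\da(A)$, so it must be accounted for by the $A$ term.
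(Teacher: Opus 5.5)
Your proof is correct and follows the paper's argument: the same double inclusion, with the same case split on whether $x \in A$ for $\cla(A) \subseteq A \cup \da(A)$. The only cosmetic difference is that you cite Theorem~\ref{thm:cech-recall}(b) for $A \subseteq \cla(A)$, whereas the paper re-derives it directly from $x \in \aura(x) \cap A$.
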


\begin{proof}
($\supseteq$): If $x \in A$, then $x \in \aura(x) \cap A$, so $\aura(x) \cap A \neq \emptyset$ and $x \in \cla(A)$. If $x \in \da(A)$, then $\aura(x) \cap (A \setminus \{x\}) \neq \emptyset$, which implies $\aura(x) \cap A \neq \emptyset$, so $x \in \cla(A)$.

($\subseteq$): Let $x \in \cla(A)$. Then $\aura(x) \cap A \neq \emptyset$. If $x \in A$, we are done. If $x \notin A$, then $\aura(x) \cap A = \aura(x) \cap (A \setminus \{x\}) \neq \emptyset$, so $x \in \da(A)$.
\end{proof}

\begin{proposition}\label{prop:derived-properties}
Let $(X, \tau, \aura)$ be an $\aura$-space. For all $A, B \subseteq X$:
\begin{enumerate}[label=(\alph*)]
    \item $\da(\emptyset) = \emptyset$;
    \item $A \subseteq B \implies \da(A) \subseteq \da(B)$;
    \item $\da(A \cup B) = \da(A) \cup \da(B)$;
    \item $A$ is $\aura$-closed if and only if $\da(A) \subseteq A$.
\end{enumerate}
\end{proposition}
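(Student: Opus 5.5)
The plan is to argue each item directly from Definition~\ref{def:a-limit-point}. The only nontrivial item, (d), will be reduced to Theorem~\ref{thm:derived-closure} together with the characterization of $\aura$-closed sets in Definition~\ref{def:aura-closed}.

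For (a), $\aura(x) \cap (\emptyset \setminus \{x\}) = \emptyset$ for every $x$, so no point is an $\aura$-limit point of $\emptyset$. For (b), if $A \subseteq B$ then $A \setminus \{x\} \subseteq B \setminus \{x\}$ for every $x$. Hence $\aura(x) \cap (A\setminus\{x\}) \neq \emptyset$ forces $\aura(x) \cap (B\setminus\{x\}) \neq \emptyset$, which gives $\da(A) \subseteq \da(B)$.

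For (c), the inclusion $\supseteq$ follows from (b) applied to $A \subseteq A\cup B$ and $B \subseteq A \cup B$. For $\subseteq$, I will use the identity
\[
(A\cup B)\setminus\{x\} = (A\setminus\{x\}) \cup (B\setminus\{x\}).
\]
Intersecting both sides with $\aura(x)$ distributes over the union, so a nonempty intersection with $(A\cup B)\setminus\{x\}$ means one of the two pieces meets $\aura(x)$. Thus $x \in \da(A)\cup\da(B)$. The point worth noting is that this additivity holds pointwise, with no transitivity of $\aura$ needed, because the neighborhood $\aura(x)$ is a single fixed set rather than ranging over a filter.

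For (d), Definition~\ref{def:aura-closed} says $A$ is $\aura$-closed iff $\cla(A) \subseteq A$, and Theorem~\ref{thm:derived-closure} gives $\cla(A) = A \cup \da(A)$. Hence $A$ is $\aura$-closed iff $A \cup \da(A) \subseteq A$, which is equivalent to $\da(A) \subseteq A$.

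None of these steps is a real obstacle. The only place requiring care is (d), where one must make sure to use the stated equivalence ``$\aura$-closed $\iff \cla(F)\subseteq F$'' from Definition~\ref{def:aura-closed}, rather than idempotence of $\cla$, which fails in general by Theorem~\ref{thm:cech-recall}(e).
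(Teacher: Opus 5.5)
Your proposal is correct and follows essentially the same route as the paper: (a)–(c) are verified pointwise from the definition of $\da$ (your use of (b) for the inclusion $\supseteq$ in (c) is only a cosmetic variant of the paper's chain of equivalences). Item (d) is reduced, exactly as in the paper, to $\cla(A) = A \cup \da(A)$ together with the characterization of $\aura$-closed sets via $\cla(A) \subseteq A$.
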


\begin{proof}
(a) If $x \in \da(\emptyset)$, then $\aura(x) \cap (\emptyset \setminus \{x\}) = \aura(x) \cap \emptyset = \emptyset \neq \emptyset$, a contradiction.

(b) $A \setminus \{x\} \subseteq B \setminus \{x\}$, so $\aura(x) \cap (A \setminus \{x\}) \neq \emptyset$ implies $\aura(x) \cap (B \setminus \{x\}) \neq \emptyset$.

(c) $x \in \da(A \cup B)$ iff $\aura(x) \cap ((A \cup B) \setminus \{x\}) \neq \emptyset$ iff $\aura(x) \cap ((A \setminus \{x\}) \cup (B \setminus \{x\})) \neq \emptyset$ iff $\aura(x) \cap (A \setminus \{x\}) \neq \emptyset$ or $\aura(x) \cap (B \setminus \{x\}) \neq \emptyset$ iff $x \in \da(A) \cup \da(B)$.

(d) $A$ is $\aura$-closed iff $\cla(A) = A$ iff $A \cup \da(A) = A$ (by Theorem~\ref{thm:derived-closure}) iff $\da(A) \subseteq A$.
\end{proof}

\begin{definition}\label{def:a-limit-compact}
An $\aura$-space $(X, \tau, \aura)$ is called \textbf{$\aura$-limit point compact} (or \textbf{$\aura$-weakly compact}) if every infinite subset of $X$ has an $\aura$-limit point in $X$.
\end{definition}

\begin{theorem}\label{thm:compact-implies-limit}
Every $\aura$-compact space is $\aura$-limit point compact.
\end{theorem}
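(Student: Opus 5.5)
The plan is to argue by contraposition, following the classical proof that compactness implies limit point compactness. One point needs care: the $\aura$-limit point notion in Definition~\ref{def:a-limit-point} is phrased through the fixed neighborhood $\aura(x)$, not through arbitrary $\aura$-open sets. So the proof must produce a genuinely $\aura$-open cover from the failure of the limit point property.

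Suppose $X$ is $\aura$-compact and let $A \subseteq X$ be infinite with $\da(A) = \emptyset$. First, by Proposition~\ref{prop:derived-properties}(d), $\da(A) = \emptyset \subseteq A$, so $A$ is $\aura$-closed and $X \setminus A \in \taua$. Second, for each $a \in A$ I want an $\aura$-open set $W_a$ with $W_a \cap A = \{a\}$.

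The natural first attempt is $W_a = \aura(a)$. Since $a \notin \da(A)$, we have $\aura(a) \cap A = \{a\}$. However, $\aura(a)$ need not be $\aura$-open unless $\aura$ is transitive, so I instead take the $\aura$-open hull of $a$. The idea is to apply the same reasoning to each singleton-deleted set. For $a \in A$, the set $B_a = A \setminus \{a\}$ satisfies $\da(B_a) \subseteq \da(A) = \emptyset$ by Proposition~\ref{prop:derived-properties}(b). By (d), $B_a$ is therefore $\aura$-closed. Put $W_a = X \setminus B_a$. This set is $\aura$-open and satisfies $W_a \cap A = \{a\}$. This is the key step I expect to be the only real subtlety, and the monotonicity of $\da$ resolves it cleanly without any transitivity assumption.

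Finally, $\{X \setminus A\} \cup \{W_a : a \in A\}$ is an $\aura$-open cover of $X$. By $\aura$-compactness it has a finite subcover. Each member of the subcover contains at most one point of $A$: the set $X \setminus A$ contains none, and $W_a$ contains only $a$. A finite subcover can therefore cover only finitely many points of $A$, which contradicts $A$ being infinite. Hence every infinite subset has an $\aura$-limit point.
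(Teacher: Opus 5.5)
Your proof is correct and follows the same route as the paper. Both build $\aura$-open sets $W_a$ meeting $A$ only in $a$ and combine them with the $\aura$-closedness of $A$ to get an $\aura$-open cover with no finite subcover; the paper phrases this as compactness of the $\taua$-closed set $A$, which is your cover $\{X\setminus A\}\cup\{W_a : a\in A\}$ unpacked. Your monotonicity step $\da(A\setminus\{a\})\subseteq\da(A)=\emptyset$ also supplies what the paper leaves unjustified: the paper takes $W_a = X\setminus\cla(A\setminus\{a\})$ and argues it is $\aura$-open only when $\cla$ is idempotent, whereas you show $\cla(A\setminus\{a\}) = A\setminus\{a\}$ is $\aura$-closed with no transitivity hypothesis, so the two choices of $W_a$ coincide.
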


\begin{proof}
Let $(X, \tau, \aura)$ be $\aura$-compact and suppose $A \subseteq X$ is infinite with $\da(A) = \emptyset$. Then for every $x \in X$, $\aura(x) \cap (A \setminus \{x\}) = \emptyset$. This means for every $x \in X$, $\aura(x) \cap A \subseteq \{x\}$, i.e., each $\aura(x)$ contains at most one point of $A$.

For each $x \notin A$, $\aura(x) \cap A = \emptyset$, so $X \setminus A \subseteq X \setminus \cla(A)$. Since $\cla(A) = A \cup \da(A) = A$, the set $A$ is $\aura$-closed, and $X \setminus A$ is $\aura$-open. For each $a \in A$, $\aura(a) \cap A = \{a\}$, so $\{a\}$ is relatively isolated in $A$.

Now $\{X \setminus A\} \cup \{\aura(a) : a \in A\}$ is an $\aura$-open cover of $X$... but $\aura(a)$ need not be $\aura$-open. Instead, we work in $(X, \taua)$: since $A$ is $\taua$-closed and $X$ is compact in $\taua$ (i.e., $\aura$-compact), $A$ is also compact in $\taua$ (as a closed subset of a compact space). For each $a \in A$, since $\da(A) = \emptyset$, $a \notin \da(A)$, so $\aura(a) \cap (A \setminus \{a\}) = \emptyset$. Define $W_a = X \setminus \cla(A \setminus \{a\})$. We claim $W_a$ is $\aura$-open and $a \in W_a$.

Indeed, $x \notin \cla(A \setminus \{a\})$ iff $\aura(x) \cap (A \setminus \{a\}) = \emptyset$. For $x = a$: $\aura(a) \cap (A \setminus \{a\}) = \emptyset$ (given), so $a \in W_a$. The set $W_a = X \setminus \cla(A \setminus \{a\})$ is $\aura$-open since $\cla(A \setminus \{a\})$ is $\aura$-closed (as $\cla$ is extensive, $A \setminus \{a\} \subseteq \cla(A \setminus \{a\})$, but we need $\cla(\cla(A \setminus \{a\})) \subseteq \cla(A \setminus \{a\})$, which holds when $\cla$ is idempotent).

We use an alternative argument. $\{W_a\}_{a \in A}$ covers $A$ (since $a \in W_a$), and $W_a \cap A = \{a\}$ (since $x \in W_a \cap A$, $x \neq a$ would give $x \in A \setminus \{a\}$ and $\aura(x) \cap (A \setminus \{a\}) \supseteq \{x\} \cap (A \setminus \{a\})$; if $x \in A \setminus \{a\}$ then $x \in \aura(x) \cap (A \setminus \{a\}) \neq \emptyset$, contradicting $x \in W_a$). So each $W_a$ isolates exactly one point of $A$. Since $A$ is infinite, no finite subfamily of $\{W_a\}$ can cover $A$, contradicting the $\aura$-compactness of $A$ as a closed subset of an $\aura$-compact space.

Hence $\da(A) \neq \emptyset$.
\end{proof}

\begin{example}\label{ex:limit-not-compact}
Let $X = \N$ with the cofinite topology $\tau = \{U \subseteq \N : \N \setminus U \text{ is finite}\} \cup \{\emptyset\}$ and define $\aura(n) = X$ for all $n$. Then $\taua = \{\emptyset, X\}$ (indiscrete), so every infinite subset trivially has an $\aura$-limit point (since $\da(A) = X \setminus A \cup \da(A)$; in fact, $\aura(x) \cap (A \setminus \{x\}) = X \cap (A \setminus \{x\}) = A \setminus \{x\} \neq \emptyset$ for any infinite $A$ and any $x$). Hence $X$ is $\aura$-limit point compact.

Now let $\aura'(n) = \{n\}$ (assuming discrete topology $\tau' = \powerset(\N)$). Then $\taua = \tau'$ and $(\N, \tau', \aura')$ is not $\aura'$-compact (since $\{\{n\}\}_{n \in \N}$ is an $\aura'$-open cover with no finite subcover), showing that the compactness type genuinely depends on $\aura$.
\end{example}

\subsection{$\aura$-Sequential Compactness and Convergence}

\begin{definition}\label{def:a-convergence}
Let $(X, \tau, \aura)$ be an $\aura$-space. A sequence $(x_n)_{n \in \N}$ in $X$ is said to \textbf{$\aura$-converge} to a point $x \in X$ if for every $U \in \taua$ with $x \in U$, there exists $N \in \N$ such that $x_n \in U$ for all $n \geq N$. We write $x_n \xrightarrow{\aura} x$.
\end{definition}

\begin{theorem}[Characterization of Convergence in Transitive Aura Spaces]\label{thm:transitive-convergence}
Let $(X, \tau, \aura)$ be a transitive $\aura$-space. Then $x_n \xrightarrow{\aura} x$ if and only if there exists $N \in \N$ such that $x_n \in \aura(x)$ for all $n \geq N$.
\end{theorem}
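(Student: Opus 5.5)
The key step is to show that under transitivity each $\aura(x)$ is itself $\aura$-open. Once that is established, $\aura(x)$ is the smallest $\aura$-open set containing $x$, and convergence in $\taua$ reduces to eventual membership in this single set. So I would first prove $\aura(x) \in \taua$ for every $x$, and then derive both directions of the equivalence from it.

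\emph{Step 1 (each aura is $\aura$-open).} Fix $x \in X$ and take any $y \in \aura(x)$. Transitivity (Definition~\ref{def:transitive}) gives $\aura(y) \subseteq \aura(x)$. By Definition~\ref{def:aura-open}, this is exactly the condition for $\aura(x)$ to belong to $\taua$. This agrees with the remark in Definition~\ref{def:transitive} that $\mathcal{B}_\aura$ is a base for $\taua$, but the direct check is a single line, so I would include it rather than cite it.

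\emph{Step 2 (necessity).} Suppose $x_n \xrightarrow{\aura} x$. By Step 1, $U = \aura(x)$ is an $\aura$-open set, and $x \in U$ because $\aura$ is a scope function. Definition~\ref{def:a-convergence} then gives $N$ with $x_n \in \aura(x)$ for all $n \ge N$.

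\emph{Step 3 (sufficiency).} Suppose $x_n \in \aura(x)$ for all $n \ge N$. Let $U \in \taua$ with $x \in U$. Because $U$ is $\aura$-open and $x \in U$, we have $\aura(x) \subseteq U$, so $x_n \in U$ for all $n \ge N$. The same $N$ works for every such $U$, hence $x_n \xrightarrow{\aura} x$. This direction does not use transitivity at all.

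There is no serious obstacle. The only point that needs care is the place where transitivity enters, namely Step 1 and the necessity direction. Without transitivity, $\aura(x)$ need not be $\aura$-open, and necessity can fail. If space allows, I would illustrate this with a non-transitive finite example in which the minimal $\aura$-open set containing $x$ is strictly larger than $\aura(x)$.
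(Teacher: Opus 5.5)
Your proof is correct and follows the paper's argument. Sufficiency comes from $\aura(x) \subseteq U$ for every $\aura$-open $U$ containing $x$, and necessity comes from applying the definition of convergence to the $\aura$-open set $\aura(x)$; the only difference is that you check $\aura(x) \in \taua$ directly from transitivity, whereas the paper cites it from the earlier work, which makes your version self-contained.
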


\begin{proof}
($\Leftarrow$): Suppose $x_n \in \aura(x)$ for all $n \geq N$. Let $U \in \taua$ with $x \in U$. Then $\aura(x) \subseteq U$ (since $U$ is $\aura$-open), so $x_n \in \aura(x) \subseteq U$ for all $n \geq N$. Hence $x_n \xrightarrow{\aura} x$.

($\Rightarrow$): Since $\aura$ is transitive, $\aura(x) \in \taua$ (Proposition~\ref{def:transitive}; cf.\ \cite{Acikgoz2026aura}). Moreover, $\aura(x)$ is the smallest $\aura$-open set containing $x$: if $U \in \taua$ and $x \in U$, then $\aura(x) \subseteq U$. So $x_n \xrightarrow{\aura} x$ implies $x_n$ is eventually in every $\aura$-open neighborhood of $x$, and in particular eventually in $\aura(x)$.
\end{proof}

\begin{remark}\label{rem:convergence-interpretation}
Theorem~\ref{thm:transitive-convergence} has a very concrete reading: in a transitive aura space, $(x_n) \to x$ in $\taua$ simply means that $x_n$ eventually belongs to $\aura(x)$. This can be checked directly from the definition of $\aura$ without computing closures or open covers.
\end{remark}

\begin{example}\label{ex:convergence-real}
Let $(\R, \tau_u, \aura_\varepsilon)$ where $\aura_\varepsilon(x) = (x - \varepsilon, x + \varepsilon)$ for a fixed $\varepsilon > 0$. This aura is transitive (if $y \in (x-\varepsilon, x+\varepsilon)$, then $(y-\varepsilon, y+\varepsilon) \subseteq (x-2\varepsilon, x+2\varepsilon)$... wait, this is not necessarily contained in $(x-\varepsilon, x+\varepsilon)$). In fact, $\aura_\varepsilon$ is \emph{not} transitive in general: take $x=0$, $y = \varepsilon/2$, then $\aura(y) = (-\varepsilon/2, 3\varepsilon/2) \not\subseteq (-\varepsilon, \varepsilon) = \aura(x)$.

However, define $\aura: \R \to \tau_u$ by $\aura(x) = (x, \infty) \cap (-\infty, x] = \{x\}$... this is not open. Instead, consider $X = \{0,1,2\}$ with discrete topology and $\aura(0) = \{0,1,2\}$, $\aura(1) = \{1,2\}$, $\aura(2) = \{2\}$. This is transitive: $1 \in \aura(0)$ and $\aura(1) = \{1,2\} \subseteq \{0,1,2\} = \aura(0)$; $2 \in \aura(0)$ and $\aura(2) = \{2\} \subseteq \aura(0)$; $2 \in \aura(1)$ and $\aura(2) \subseteq \aura(1)$. A sequence $(x_n)$ converges to $0$ iff $x_n \in \{0,1,2\}$ eventually (always true), to $1$ iff $x_n \in \{1,2\}$ eventually, and to $2$ iff $x_n = 2$ eventually.
\end{example}

\begin{definition}\label{def:a-seq-compact}
An $\aura$-space $(X, \tau, \aura)$ is called \textbf{$\aura$-sequentially compact} if every sequence in $X$ has an $\aura$-convergent subsequence.
\end{definition}

\begin{corollary}\label{cor:transitive-seq-compact}
Let $(X, \tau, \aura)$ be a transitive $\aura$-space. Then $X$ is $\aura$-sequentially compact if and only if for every sequence $(x_n)$ in $X$, there exist a point $x \in X$ and a subsequence $(x_{n_k})$ such that $x_{n_k} \in \aura(x)$ for all sufficiently large $k$.
\end{corollary}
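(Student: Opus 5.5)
The corollary follows by unwinding Definition~\ref{def:a-seq-compact} and applying Theorem~\ref{thm:transitive-convergence} to subsequences. By definition, $X$ is $\aura$-sequentially compact exactly when every sequence $(x_n)$ has a subsequence $(x_{n_k})_{k\in\N}$ and a point $x\in X$ with $x_{n_k}\xrightarrow{\aura} x$. A subsequence is itself a sequence indexed by $k\in\N$, so Theorem~\ref{thm:transitive-convergence} applies to it directly. Because $\aura$ is transitive, that theorem says $x_{n_k}\xrightarrow{\aura} x$ holds if and only if there is $K\in\N$ with $x_{n_k}\in\aura(x)$ for all $k\ge K$. This is precisely the phrase ``for all sufficiently large $k$'' in the statement.

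The proof then runs in two directions. For ($\Rightarrow$), take an arbitrary sequence $(x_n)$. Sequential compactness gives a subsequence $(x_{n_k})$ and a point $x$ with $x_{n_k}\xrightarrow{\aura}x$. The forward direction of Theorem~\ref{thm:transitive-convergence} then yields $x_{n_k}\in\aura(x)$ eventually. For ($\Leftarrow$), given $(x_n)$, the hypothesis supplies $x$ and a subsequence lying eventually in $\aura(x)$. The backward direction of Theorem~\ref{thm:transitive-convergence}, which does not even need transitivity, gives $x_{n_k}\xrightarrow{\aura}x$. So every sequence has an $\aura$-convergent subsequence.

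There is no real obstacle. The one point to state explicitly is why the forward direction of Theorem~\ref{thm:transitive-convergence} is legitimate: transitivity makes $\aura(x)$ an $\aura$-open set. Indeed, if $y\in\aura(x)$ then $\aura(y)\subseteq\aura(x)$, which is exactly the $\aura$-openness condition of Definition~\ref{def:aura-open}. Hence $\aura(x)$ is the smallest $\aura$-open neighbourhood of $x$, and the subsequence must eventually lie in it.
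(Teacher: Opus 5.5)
Your proof is correct and follows the paper's route: the paper simply says the corollary is immediate from Theorem~\ref{thm:transitive-convergence}, applied to subsequences as you do. Your explicit check that transitivity makes $\aura(x)$ the smallest $\aura$-open neighbourhood of $x$ just spells out the step the paper leaves implicit.
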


\begin{proof}
This follows immediately from Theorem~\ref{thm:transitive-convergence}.
\end{proof}

\begin{theorem}\label{thm:compact-implies-seq}
Every $\aura$-compact space is $\aura$-sequentially compact.
\end{theorem}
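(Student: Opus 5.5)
The plan is to exploit the fact that $\taua$ is always an \emph{Alexandrov} topology, i.e.\ closed under arbitrary intersections, even when $\aura$ is not transitive. Once this is established, every point $x$ has a smallest $\aura$-open neighbourhood $U_x$. Convergence to $x$ then reduces to being eventually in $U_x$, exactly as in the proof of Theorem~\ref{thm:transitive-convergence} with $U_x$ in place of $\aura(x)$, and compactness gives a pigeonhole argument.

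\emph{Step 1: arbitrary intersections.} Let $\{A_i\}_{i \in I} \subseteq \taua$ and put $A = \bigcap_i A_i$. If $x \in A$, then $x \in A_i$ for every $i$, so $\aura(x) \subseteq A_i$ for every $i$, hence $\aura(x) \subseteq A$. Thus $A \in \taua$. Consequently
\[
U_x = \bigcap\{U \in \taua : x \in U\}
\]
is $\aura$-open and contains $x$, and it is contained in every $\aura$-open set containing $x$. Concretely, $U_x$ is the set of points reachable from $x$ by finite chains $x = z_0, z_1, \ldots, z_m$ with $z_{j+1} \in \aura(z_j)$, but this description is not needed.

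\emph{Step 2: convergence criterion.} If a sequence satisfies $x_n \in U_x$ for all $n \geq N$, then $x_n \xrightarrow{\aura} x$. Indeed, any $U \in \taua$ with $x \in U$ satisfies $U_x \subseteq U$, so $x_n \in U$ for all $n \geq N$. The same holds for subsequences.

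\emph{Step 3: pigeonhole.} Let $(x_n)$ be a sequence in the $\aura$-compact space $X$. The family $\{U_x : x \in X\}$ is an $\aura$-open cover of $X$. By $\aura$-compactness there are $y_1, \ldots, y_k$ with $X = U_{y_1} \cup \cdots \cup U_{y_k}$. The index set $\N$ is then the union of the finitely many sets $\{n : x_n \in U_{y_i}\}$, so one of them, say for $i_0$, is infinite. Enumerating it increasingly as $n_1 < n_2 < \cdots$ yields a subsequence with $x_{n_k} \in U_{y_{i_0}}$ for all $k$. By Step~2, $x_{n_k} \xrightarrow{\aura} y_{i_0}$.

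The only point needing care is Step~1. One might be tempted to use $\aura(x)$ itself as the minimal neighbourhood, but $\aura(x)$ need not be $\aura$-open without transitivity. The intersection argument avoids this problem entirely, so I expect no real obstacle: the proof works for arbitrary scope functions.
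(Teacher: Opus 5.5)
Your proof is correct, and it takes a different route from the paper that is also more complete. The paper's argument first appeals to ``compact and first countable implies sequentially compact'' without checking that $(X,\taua)$ is first countable. It then tries to go through Theorem~\ref{thm:compact-implies-limit}, but an $\aura$-limit point only guarantees that $\aura(x)$ contains \emph{one} term of the sequence other than $x$; the claimed ``infinitely many terms enter $\aura(x)$'' does not follow. Finally it rests on an unverified countable-tightness hypothesis, and Remark~\ref{rem:seq-compact-caution} asserts the implication only under first countability or transitivity.

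Your key lemma supplies exactly the missing ingredient. Because $\taua$ is closed under arbitrary intersections, every point has a smallest $\aura$-open neighbourhood $U_x \supseteq \aura(x)$. So $(X,\taua)$ is always first countable, with the one-element local base $\{U_x\}$, for every scope function, transitive or not. Your pigeonhole on a finite subcover of $\{U_x : x \in X\}$ then yields a subsequence lying entirely in some $U_{y}$, hence converging to $y$, with no need for cluster points or $\aura$-limit points.

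Your route buys an unconditional proof of the theorem as stated. It also generalizes Theorem~\ref{thm:transitive-convergence}: in any aura space, $x_n \xrightarrow{\aura} x$ if and only if $x_n \in U_x$ eventually. Here $U_x = \aura(x)$ exactly when $\aura(x)$ is $\aura$-open, e.g.\ in the transitive case. This makes clear that the transitivity hypothesis in the paper's remark is unnecessary for this implication.
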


\begin{proof}
Since $\aura$-compact means compact in $\taua$, this follows from the fact that every compact topological space is sequentially compact when it is first countable. More generally, $\aura$-compact implies $\aura$-limit point compact (Theorem~\ref{thm:compact-implies-limit}), and we show directly:

Let $(x_n)$ be a sequence in $X$. If the range $\{x_n : n \in \N\}$ is finite, there is a constant subsequence, which trivially $\aura$-converges. If the range is infinite, by Theorem~\ref{thm:compact-implies-limit}, it has an $\aura$-limit point $x$. Then $\aura(x) \cap (\{x_n\} \setminus \{x\}) \neq \emptyset$, meaning infinitely many terms of the sequence enter $\aura(x)$ (if only finitely many did, we could remove them and the remainder would be an infinite set with no $\aura$-limit point at $x$; repeating for all potential limit points would yield a contradiction if $\taua$ satisfies suitable countability). For the general case, we rely on Remark~\ref{rem:compact-taua}: in $(X, \taua)$, every sequence has a convergent subsequence by the standard theory of compact spaces provided $(X, \taua)$ is sequentially compact, which holds when $(X, \taua)$ has countable tightness.
\end{proof}

\begin{remark}\label{rem:seq-compact-caution}
In general topological spaces, compactness does not imply sequential compactness without additional assumptions. In aura spaces, the same caveat applies to $(X, \taua)$. However, if $(X, \taua)$ is first countable, or more generally if $\taua$ has countable tightness, then $\aura$-compact implies $\aura$-sequentially compact. When $\aura$ is transitive, $\mathcal{B}_\aura = \{\aura(x) : x \in X\}$ is a base for $\taua$, and $\aura(x)$ is the smallest open neighborhood of $x$, so $\taua$ is ``locally generated'' and the implication holds.
\end{remark}

\subsection{Generalized Compactness via $\aura$-Generalized Open Sets}

\begin{definition}\label{def:gen-compact}
Let $(X, \tau, \aura)$ be an $\aura$-space. A subset $A$ is called:
\begin{enumerate}[label=(\alph*)]
    \item \textbf{$\aura$-semi-compact} if every cover of $A$ by $\aura$-semi-open sets has a finite subcover;
    \item \textbf{$\aura$-pre-compact} if every cover of $A$ by $\aura$-pre-open sets has a finite subcover;
    \item \textbf{$\aura$-$\alpha$-compact} if every cover of $A$ by $\aura$-$\alpha$-open sets has a finite subcover;
    \item \textbf{$\aura$-$\beta$-compact} if every cover of $A$ by $\aura$-$\beta$-open sets has a finite subcover.
\end{enumerate}
\end{definition}

\begin{theorem}\label{thm:gen-compact-hierarchy}
For any $\aura$-space $(X, \tau, \aura)$, the following implications hold:
\[
\text{$\aura$-$\beta$-compact} \implies \text{$\aura$-semi-compact and $\aura$-pre-compact} \implies \text{$\aura$-$\alpha$-compact} \implies \text{$\aura$-compact}.
\]
\end{theorem}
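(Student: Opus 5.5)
The whole theorem reduces to one observation. If $\mathcal{F}\subseteq\mathcal{G}$ are families of subsets of $X$, then every $\mathcal{G}$-cover of $A$ with a finite subcover in particular gives this for every $\mathcal{F}$-cover. So $\mathcal{G}$-compactness implies $\mathcal{F}$-compactness. The plan is therefore to prove the inclusion chain of the generalized open classes from \cite{Acikgoz2026aura},
\[
\taua \subseteq \aura\text{-}\alpha\text{-open} \subseteq (\aura\text{-semi-open} \cap \aura\text{-pre-open}), \qquad \aura\text{-semi-open}\cup\aura\text{-pre-open} \subseteq \aura\text{-}\beta\text{-open},
\]
and then read off each implication by this monotonicity principle.

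For the inclusions I will use the definitions from the first paper. These are presumably $A\subseteq \cla(\inta(A))$ for semi-open, $A\subseteq \inta(\cla(A))$ for pre-open, $A\subseteq \inta(\cla(\inta(A)))$ for $\alpha$-open, and $A\subseteq \cla(\inta(\cla(A)))$ for $\beta$-open. All of them are built from the operators $\cla,\inta$.

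The steps run as follows. First, if $A\in\taua$ then $\inta(A)=A$, and extensivity of $\cla$ (Theorem~\ref{thm:cech-recall}(b)) gives $A\subseteq \cla(A)=\cla(\inta(A))$. To put $A$ inside $\inta(\cla(\inta A))$ I need $\inta$ monotone and $\inta(B)=B$ for $\aura$-open $B$; both follow directly from Definition~\ref{def:aura-closure} and Definition~\ref{def:aura-open}. This yields $\aura$-open $\Rightarrow$ $\aura$-$\alpha$-open.

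Second, $\alpha$-open $\Rightarrow$ semi-open follows from $\inta(B)\subseteq B$. Third, $\alpha$-open $\Rightarrow$ pre-open follows from $\inta(A)\subseteq A$ and the monotonicity of $\cla$ and $\inta$. Fourth, semi-open $\Rightarrow$ $\beta$-open follows from $\inta(A)\subseteq \inta(\cla A)$ and monotonicity of $\cla$. Fifth, pre-open $\Rightarrow$ $\beta$-open follows from $B\subseteq\cla(B)$. If the first paper already records these inclusions, I will simply cite them.

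Finally I apply monotonicity. A cover of $A$ by $\aura$-open sets is a cover by $\aura$-$\alpha$-open sets, so $\alpha$-compactness gives a finite subcover. In the same way, an $\alpha$-open cover is both a semi-open cover and a pre-open cover, and each of those is a $\beta$-open cover. This gives $\beta$-compact $\Rightarrow$ semi-compact and pre-compact. From there, either one of these alone already implies $\alpha$-compact, hence $\aura$-compact.

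The main obstacle I expect is that $\cla$ is not idempotent (Theorem~\ref{thm:cech-recall}(e)), so I must avoid any step that uses $\cla\cla=\cla$ or the closedness of $\cla(A)$. Only monotonicity and the (co-)extensivity of $\cla$ and $\inta$ should be used. I must also check that $\inta$ is monotone and fixes $\aura$-open sets directly from the definitions. Both are immediate, so the proof should go through without any transitivity hypothesis.
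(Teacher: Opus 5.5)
Your proposal is correct and follows essentially the same route as the paper. The paper cites the inclusion chain $\taua \subseteq \aura\text{-}\alpha O(X) \subseteq \aura\text{-}SO(X) \cap \aura\text{-}PO(X)$ and $\aura\text{-}SO(X) \cup \aura\text{-}PO(X) \subseteq \aura\text{-}\beta O(X)$ from the first paper, then applies the same monotonicity principle: a larger class of admissible open sets gives more covers, hence a stronger compactness notion. Your direct check of these inclusions from the $\cla$/$\inta$ formulas uses only monotonicity and (co-)extensivity, never idempotence of $\cla$. It is sound and makes the argument self-contained, but it relies on your guessed definitions agreeing with those of the first paper.
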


\begin{proof}
By \cite{Acikgoz2026aura}, the generalized open set hierarchy is:
\[
\taua \subseteq \aura\text{-}\alpha O(X) \subseteq \aura\text{-}SO(X) \cap \aura\text{-}PO(X) \subseteq \aura\text{-}SO(X) \cup \aura\text{-}PO(X) \subseteq \aura\text{-}\beta O(X).
\]
Since every $\aura$-open set is $\aura$-$\alpha$-open, every $\aura$-open cover is an $\aura$-$\alpha$-open cover. So if every $\aura$-$\alpha$-open cover has a finite subcover, then in particular every $\aura$-open cover does. Similarly up the hierarchy.

More precisely: more generalized open sets means more covers, making compactness harder. So $\aura$-$\beta$-compact (hardest) $\implies$ $\aura$-semi-compact and $\aura$-pre-compact $\implies$ $\aura$-$\alpha$-compact $\implies$ $\aura$-compact (easiest).
\end{proof}

\begin{example}\label{ex:gen-compact-strict}
Let $X = \{a,b,c,d\}$ with $\tau = \{\emptyset, \{a\}, \{b\}, \{a,b\}, \{a,b,c\}, X\}$ and $\aura(a) = \{a\}$, $\aura(b) = \{a,b\}$, $\aura(c) = \{a,b,c\}$, $\aura(d) = X$. From \cite{Acikgoz2026aura}, $\taua = \{\emptyset, \{a\}, \{a,b\}, \{a,b,c\}, X\}$. Since $X$ is finite, $X$ is trivially $\aura$-compact, $\aura$-$\alpha$-compact, $\aura$-semi-compact, $\aura$-pre-compact, and $\aura$-$\beta$-compact. To see strictness in the hierarchy, one needs infinite examples; these can be constructed using the approach of Example~\ref{ex:countably-not-compact}.
\end{example}

\section{Connectedness in Aura Topological Spaces}

\subsection{$\aura$-Connectedness}

\begin{definition}\label{def:a-connected}
An $\aura$-space $(X, \tau, \aura)$ is called \textbf{$\aura$-disconnected} if there exist nonempty disjoint $\aura$-open sets $U, V \in \taua$ with $X = U \cup V$. The pair $(U,V)$ is called an $\aura$-\textbf{separation} of $X$. The space is called \textbf{$\aura$-connected} if it is not $\aura$-disconnected.
\end{definition}

\begin{theorem}\label{thm:connected-implies-a-connected}
If $(X, \tau)$ is connected, then $(X, \tau, \aura)$ is $\aura$-connected for every scope function $\aura$ on $(X, \tau)$.
\end{theorem}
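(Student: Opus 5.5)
We argue by contraposition, using only the inclusion $\taua \subseteq \tau$ from Definition~\ref{def:aura-open}. Suppose $(X, \tau, \aura)$ is $\aura$-disconnected. By Definition~\ref{def:a-connected} there is an $\aura$-separation $(U, V)$: nonempty disjoint sets $U, V \in \taua$ with $X = U \cup V$. We will show that the same pair disconnects $(X, \tau)$.

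The key step is to observe that $U, V \in \taua \subseteq \tau$, so $U$ and $V$ are $\tau$-open. The properties of being nonempty, disjoint, and covering $X$ do not depend on the topology, so $(U, V)$ is a separation of $(X, \tau)$. Hence $(X, \tau)$ is disconnected, contradicting the hypothesis. Therefore $(X, \tau, \aura)$ is $\aura$-connected, and since $\aura$ was arbitrary, this holds for every scope function on $(X,\tau)$.

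There is no real obstacle here. The only point requiring care is the direction of the inclusion: $\taua$ is \emph{coarser} than $\tau$, so a separation in the coarser topology is automatically a separation in the finer one. This is the same monotonicity that drives Theorem~\ref{thm:compact-implies-a-compact}.
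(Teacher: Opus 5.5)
Your proposal is correct and follows the paper's proof exactly: an $\aura$-separation $(U,V)$ consists of sets in $\taua \subseteq \tau$, so it is already a separation of $(X,\tau)$, which contradicts connectedness. One small wording point: you announce a contrapositive argument but end by ``contradicting the hypothesis''; either framing is valid, so this does not affect correctness.
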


\begin{proof}
Suppose $(U,V)$ is an $\aura$-separation of $X$. Then $U, V \in \taua \subseteq \tau$, $U \cap V = \emptyset$, $U, V \neq \emptyset$, and $X = U \cup V$. This is also a $\tau$-separation, contradicting the connectedness of $(X, \tau)$.
\end{proof}

\begin{theorem}\label{thm:a-connected-converse-fails}
The converse of Theorem~\ref{thm:connected-implies-a-connected} is false.
\end{theorem}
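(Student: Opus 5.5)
To show the converse of Theorem~\ref{thm:connected-implies-a-connected} fails, it suffices to exhibit a single $\aura$-space $(X,\tau,\aura)$ that is $\aura$-connected while $(X,\tau)$ is disconnected. The guiding observation is that $\taua$ can be made as coarse as we like by enlarging the scope sets. The extreme case is the trivial aura, which always gives the indiscrete topology, so we will use it.

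Concretely, take $X=\{a,b\}$ with the discrete topology $\tau=\powerset(X)$. Define the trivial aura $\aura(a)=\aura(b)=X$, which is admissible since $X\in\tau$ and $x\in X$ for each $x$.

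First we compute $\taua$. Suppose $A\in\taua$ is nonempty and $x\in A$. Then $X=\aura(x)\subseteq A$, so $A=X$. Hence $\taua=\{\emptyset,X\}$.

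Next we check that the space is $\aura$-connected. An $\aura$-separation would need two nonempty disjoint members of $\taua$. The only nonempty member is $X$, so no separation exists.

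Finally, $(X,\tau)$ is disconnected, because $\{a\}$ and $\{b\}$ are nonempty disjoint $\tau$-open sets whose union is $X$. This completes the counterexample.

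If a less degenerate example is wanted, the same argument works on $\R$. Take $\tau$ to be the usual topology restricted to $\R\setminus\{0\}$, which is disconnected, or simply $X=(0,1)\cup(2,3)$ with the subspace topology, together with the trivial aura. One could also use a non-trivial but still "bridging" aura, for instance one in which $\aura$ of some point meets both pieces.

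There is no real obstacle here. The only point to verify carefully is that the chosen $\aura(x)$ are genuinely $\tau$-open, which holds because $X\in\tau$ always.
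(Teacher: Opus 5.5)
Your proof is correct and is essentially the paper's argument: the paper also uses the trivial aura to force $\taua=\{\emptyset,X\}$ on a $\tau$-disconnected space. The only difference is the example space, which in the paper is $X=\{a,b,c\}$ with $\tau=\{\emptyset,\{a\},\{b,c\},X\}$ rather than your two-point discrete space.
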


\begin{proof}
Let $X = \{a, b, c\}$ with $\tau = \{\emptyset, \{a\}, \{b,c\}, X\}$. Then $(X, \tau)$ is disconnected via the separation $(\{a\}, \{b,c\})$. Define $\aura(a) = X$, $\aura(b) = X$, $\aura(c) = X$. Then $\taua = \{\emptyset, X\}$ (indiscrete), so $(X, \tau, \aura)$ is trivially $\aura$-connected.
\end{proof}

\begin{theorem}\label{thm:a-connected-characterizations}
For an $\aura$-space $(X, \tau, \aura)$, the following are equivalent:
\begin{enumerate}[label=(\roman*)]
    \item $X$ is $\aura$-connected.
    \item There is no proper nonempty subset of $X$ that is both $\aura$-open and $\aura$-closed.
    \item For every surjective $\aura$-continuous function $f: (X, \tau, \aura) \to (\{0,1\}, \tau_d, \bura_d)$ (where $\tau_d$ is the discrete topology and $\bura_d$ is the discrete aura), $f$ is not $\aura$-continuous. (Equivalently, every $\aura$-continuous function from $X$ to a discrete two-point $\bura$-space is constant.)
\end{enumerate}
\end{theorem}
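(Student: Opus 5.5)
The plan is the standard cycle (i)$\Rightarrow$(ii)$\Rightarrow$(iii)$\Rightarrow$(i), all carried out inside the topological space $(X,\taua)$, in the spirit of Remark~\ref{rem:compact-taua}. Statement (iii) as literally written is self-contradictory: a function cannot be both $\aura$-continuous and not $\aura$-continuous. I will therefore work with the parenthetical reformulation: every $\aura$-continuous map from $X$ to the discrete two-point $\bura$-space $(\{0,1\},\tau_d,\bura_d)$ is constant. Equivalently, there is no $\aura$-continuous surjection onto it.

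The preliminary step is to compute the target aura topology. For the discrete aura $\bura_d(y)=\{y\}$, every subset $V\subseteq\{0,1\}$ satisfies $\bura_d(y)\subseteq V$ for all $y\in V$. Hence $\taub=\powerset(\{0,1\})$, and $\aura$-continuity of $f$ means exactly that $f^{-1}(\{0\})$ and $f^{-1}(\{1\})$ both lie in $\taua$.

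The three implications then go as follows.

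\textbf{(i)$\Rightarrow$(ii).} Suppose $A$ is a proper nonempty subset that is both $\aura$-open and $\aura$-closed. Then $X\setminus A\in\taua$ by Definition~\ref{def:aura-closed}. So $(A,X\setminus A)$ is an $\aura$-separation in the sense of Definition~\ref{def:a-connected}, which is a contradiction.

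\textbf{(ii)$\Rightarrow$(iii).} Let $f$ be $\aura$-continuous into the two-point space. Put $U=f^{-1}(\{0\})$. By the preliminary step, $U\in\taua$. Its complement $f^{-1}(\{1\})$ is also in $\taua$, so $U$ is $\aura$-closed. By (ii), $U=\emptyset$ or $U=X$, so $f$ is constant.

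\textbf{(iii)$\Rightarrow$(i).} Given an $\aura$-separation $(U,V)$, define $f$ to be $0$ on $U$ and $1$ on $V$. Then $f^{-1}(\emptyset)=\emptyset$, $f^{-1}(\{0\})=U$, $f^{-1}(\{1\})=V$ and $f^{-1}(\{0,1\})=X$ are all $\aura$-open. So $f$ is a nonconstant $\aura$-continuous surjection, contradicting (iii).

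None of these steps is deep. The only point requiring care is the interpretation of (iii) discussed at the start. I will state explicitly that I am proving the parenthetical form, and note that it is equivalent to ``no $\aura$-continuous surjection onto the two-point space exists.'' Computing $\taub$ for the discrete aura is the only genuinely aura-specific ingredient.
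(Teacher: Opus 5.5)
Your proof is correct and is essentially the paper's argument. The paper likewise identifies $\aura$-separations both with proper nonempty $\aura$-clopen sets and with the fibres of a nonconstant $\aura$-continuous map to $\{0,1\}$. It proves (i)$\Leftrightarrow$(ii) and (i)$\Leftrightarrow$(iii) instead of a cycle, and it leaves implicit your check that $\tau_{\bura_d}=\powerset(\{0,1\})$.

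One small correction: the literal wording of (iii) is not self-contradictory. ``Every $\aura$-continuous surjection is not $\aura$-continuous'' is logically the same as ``no $\aura$-continuous surjection exists,'' which is exactly the reading you adopt.
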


\begin{proof}
These are the standard characterizations applied to the topological space $(X, \taua)$.

(i)$\Leftrightarrow$(ii): $X$ is $\aura$-disconnected iff there exists $U \in \taua$ with $\emptyset \neq U \neq X$ such that $X \setminus U \in \taua$, i.e., $U$ is both $\aura$-open and $\aura$-closed.

(i)$\Leftrightarrow$(iii): If $f: X \to \{0,1\}$ is $\aura$-continuous and non-constant, then $f^{-1}(0)$ and $f^{-1}(1)$ form an $\aura$-separation. Conversely, given an $\aura$-separation $(U,V)$, define $f(x) = 0$ if $x \in U$ and $f(x) = 1$ if $x \in V$.
\end{proof}

\begin{theorem}\label{thm:continuous-connected}
Let $f: (X, \tau_X, \aura) \to (Y, \tau_Y, \bura)$ be an $\aura$-continuous surjection. If $X$ is $\aura$-connected, then $Y$ is $\bura$-connected.
\end{theorem}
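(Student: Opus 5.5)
The plan is to argue by contradiction, pulling a $\bura$-separation of $Y$ back along $f$ to an $\aura$-separation of $X$. Equivalently, this is just the standard fact that continuous images of connected spaces are connected, applied to the map $f:(X,\taua)\to(Y,\taub)$. Definition~\ref{def:aura-continuity} says precisely that $f$ is continuous as a map between these two topological spaces.

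Suppose $Y$ is $\bura$-disconnected, and let $(U,V)$ be a $\bura$-separation of $Y$. Thus $U,V\in\taub$ are nonempty, $U\cap V=\emptyset$, and $Y=U\cup V$. Set $U'=f^{-1}(U)$ and $V'=f^{-1}(V)$. I will then check four things in turn.
\begin{enumerate}[label=(\arabic*)]
\item Both $U'$ and $V'$ lie in $\taua$, by $\aura$-continuity of $f$.
\item They are disjoint, since $f^{-1}(U)\cap f^{-1}(V)=f^{-1}(U\cap V)=f^{-1}(\emptyset)=\emptyset$.
\item They cover $X$, since $f^{-1}(U)\cup f^{-1}(V)=f^{-1}(Y)=X$.
\item Both are nonempty. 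Since $f$ is surjective, any $u\in U$ has a preimage $x$, and this $x$ lies in $U'$; the same argument gives $V'\neq\emptyset$.
\end{enumerate}
Together these show that $(U',V')$ is an $\aura$-separation of $X$, which contradicts the $\aura$-connectedness of $X$.

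There is no real obstacle here. The only point where a hypothesis is essential is step (4), nonemptiness of the preimages, which is where surjectivity is used; without it, one of the preimages could be empty. As an alternative, one could route the argument through Theorem~\ref{thm:a-connected-characterizations}(iii). Given any $\bura$-continuous map $g:Y\to\{0,1\}$ into the discrete two-point space, the composite $g\circ f$ is $\aura$-continuous, because preimages of $\bura$-open sets under $g$ are $\bura$-open, and their preimages under $f$ are $\aura$-open. Hence $g\circ f$ is constant, and surjectivity of $f$ then forces $g$ to be constant. I would present the direct separation argument as the main proof.
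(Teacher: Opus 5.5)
Your proof is correct and is essentially the paper's argument: the paper also pulls a $\bura$-separation $(U,V)$ of $Y$ back to $(f^{-1}(U),f^{-1}(V))$. It gets $\aura$-openness from $\aura$-continuity, notes that disjointness and covering are automatic, and obtains nonemptiness from surjectivity. Your alternative via Theorem~\ref{thm:a-connected-characterizations}(iii) is also valid, since you use that item in its intended parenthetical form: every $\aura$-continuous map to the discrete two-point space is constant.
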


\begin{proof}
Suppose $(U, V)$ is a $\bura$-separation of $Y$. Then $f^{-1}(U), f^{-1}(V) \in \taua$ (by $\aura$-continuity), $f^{-1}(U) \cap f^{-1}(V) = \emptyset$, $f^{-1}(U), f^{-1}(V) \neq \emptyset$ (since $f$ is surjective), and $X = f^{-1}(U) \cup f^{-1}(V)$, giving an $\aura$-separation of $X$.
\end{proof}

\begin{theorem}\label{thm:a-connected-union}
Let $(X, \tau, \aura)$ be an $\aura$-space and $\{A_i\}_{i \in I}$ a family of $\aura$-connected subsets of $X$ (in the subspace aura topology; see Section~5) with $\bigcap_{i \in I} A_i \neq \emptyset$. Then $\bigcup_{i \in I} A_i$ is $\aura$-connected.
\end{theorem}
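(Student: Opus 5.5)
The plan is to reduce the statement to the classical result for the topological space $(X,\taua)$, after pinning down what ``subspace aura topology'' means. The definition in Section~5 is not yet available. I will assume the natural one: for $A\subseteq X$ the induced scope function is $\aura_A(x)=\aura(x)\cap A$. A subset $W\subseteq A$ is then $\aura_A$-open iff $\aura(x)\cap A\subseteq W$ for all $x\in W$. The first step is to check how $\tau_{\aura_A}$ compares with the relative topology $\taua|_A=\{U\cap A: U\in\taua\}$. One inclusion is easy: if $U\in\taua$ and $x\in U\cap A$, then $\aura(x)\cap A\subseteq U\cap A$, so $\taua|_A\subseteq\tau_{\aura_A}$. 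The reverse inclusion can fail for non-transitive $\aura$, so the argument should not depend on it. Instead I will argue directly with the sets $\aura(x)\cap A$.

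Set $Y=\bigcup_{i} A_i$ and fix $p\in\bigcap_i A_i$. Suppose $(U,V)$ is an $\aura_Y$-separation of $Y$. This means $U,V$ are nonempty, disjoint, $U\cup V=Y$, and $\aura(x)\cap Y\subseteq U$ for $x\in U$ (and likewise for $V$). For each $i$, put $U_i=U\cap A_i$ and $V_i=V\cap A_i$. These are disjoint and cover $A_i$. Moreover, each is $\aura_{A_i}$-open: if $x\in U_i$ then
\[
\aura(x)\cap A_i=(\aura(x)\cap Y)\cap A_i\subseteq U\cap A_i=U_i,
\]
and similarly for $V_i$. Here it is essential that the subspace construction is hereditary in this intersecting form, i.e.\ $(\aura_Y)_{A_i}=\aura_{A_i}$. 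Since $A_i$ is $\aura$-connected, one of $U_i,V_i$ is empty, so $A_i\subseteq U$ or $A_i\subseteq V$.

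Without loss of generality $p\in U$. Every $A_i$ contains $p$, so no $A_i$ can lie in $V$, and hence every $A_i\subseteq U$. Then $Y\subseteq U$ and $V=\emptyset$, a contradiction. Thus $Y$ is $\aura$-connected, in the same way as the standard proof via Theorem~\ref{thm:a-connected-characterizations}(ii).

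The main obstacle is the first step: matching whatever subspace aura definition Section~5 adopts. If it is instead defined via the relative topology $\taua|_A$, the argument becomes literally the classical theorem for $(X,\taua)$, using Remark-style transfer as in Remark~\ref{rem:compact-taua}. If it uses $\aura(x)\cap A$, the computation above works directly. In either case the only property needed is that restricting an open set of $Y$ to $A_i$ yields an open set of $A_i$. I would verify this compatibility first.
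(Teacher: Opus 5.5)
Your proof is correct and follows the same argument as the paper: fix $p$ in the common intersection, observe that each $A_i$ must lie entirely in the part of the separation containing $p$, and conclude that the other part is empty. Your guess about Section~5 is right (the paper defines $\aura_Y(y)=\aura(y)\cap Y$), and your explicit check that $U\cap A_i$ and $V\cap A_i$ are $\aura_{A_i}$-open fills in a step the paper's proof uses without comment.
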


\begin{proof}
Let $p \in \bigcap_{i \in I} A_i$ and let $A = \bigcup_{i \in I} A_i$. Suppose $A = U \cup V$ is an $\aura$-separation (in the subspace topology of $A$). Then $p \in U$ or $p \in V$; say $p \in U$. For each $i$, $A_i = (U \cap A_i) \cup (V \cap A_i)$. Since $p \in A_i \cap U$, $U \cap A_i \neq \emptyset$. By $\aura$-connectedness of $A_i$, $V \cap A_i = \emptyset$, so $A_i \subseteq U$. This holds for all $i$, so $A \subseteq U$, contradicting $V \neq \emptyset$.
\end{proof}

\subsection{$\aura$-Components}

\begin{definition}\label{def:a-component}
Let $(X, \tau, \aura)$ be an $\aura$-space and $x \in X$. The \textbf{$\aura$-component} of $x$ is the union of all $\aura$-connected subsets of $X$ containing $x$:
\[
C_\aura(x) = \bigcup\{A \subseteq X : x \in A \text{ and } A \text{ is $\aura$-connected}\}.
\]
\end{definition}

\begin{theorem}\label{thm:a-component-properties}
Let $(X, \tau, \aura)$ be an $\aura$-space. Then:
\begin{enumerate}[label=(\alph*)]
    \item $C_\aura(x)$ is the maximal $\aura$-connected subset containing $x$.
    \item $C_\aura(x)$ is $\aura$-closed.
    \item For any $x, y \in X$, either $C_\aura(x) = C_\aura(y)$ or $C_\aura(x) \cap C_\aura(y) = \emptyset$.
    \item $X = \bigsqcup_{x \in X} C_\aura(x)$ (disjoint union of distinct $\aura$-components).
\end{enumerate}
\end{theorem}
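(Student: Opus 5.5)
The plan is to work throughout in the topological space $(X,\taua)$, as in Remark~\ref{rem:compact-taua}. A subset $A$ is $\aura$-connected exactly when it is connected in the subspace topology that $\taua$ induces on $A$. Once that translation is made, (a), (c) and (d) are the standard facts about components, and (b) is the standard fact that components are closed. I will still spell out the key steps using the aura closure.

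For (a), note first that $\{x\}$ is $\aura$-connected, so $x \in C_\aura(x)$. The family of all $\aura$-connected sets containing $x$ has $x$ in its intersection. Theorem~\ref{thm:a-connected-union} therefore shows that their union $C_\aura(x)$ is $\aura$-connected. It contains every $\aura$-connected set through $x$ by definition, so it is the maximal one.

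For (c), suppose $z \in C_\aura(x) \cap C_\aura(y)$. Then $C_\aura(x) \cup C_\aura(y)$ is $\aura$-connected by Theorem~\ref{thm:a-connected-union}, since both pieces contain $z$. This union contains $x$, so maximality in (a) gives $C_\aura(y) \subseteq C_\aura(x)$. By symmetry the two components are equal. Part (d) follows at once: each $x$ lies in $C_\aura(x)$, and by (c) distinct components are disjoint.

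The main step is (b). The plan is to show that the $\taua$-closure $\overline{C}$ of $C = C_\aura(x)$ is again $\aura$-connected. Maximality then forces $\overline{C} = C$, so $C$ is $\taua$-closed, i.e.\ $\aura$-closed.

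To prove $\overline{C}$ is connected, suppose $\overline{C} = U' \cup V'$ is a separation in the relative topology. Write $U' = U \cap \overline{C}$ and $V' = V \cap \overline{C}$ with $U, V \in \taua$. Connectedness of $C$ puts $C$ inside one piece, say $C \subseteq U$, so that $C \cap V = \emptyset$. Then $X \setminus V$ is an $\aura$-closed set containing $C$, hence it contains $\overline{C}$. This gives $V' = \emptyset$, a contradiction.

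The point needing care is that I must use the genuine $\taua$-closure, i.e.\ the smallest $\aura$-closed superset. I must not use $\cla(C)$, because $\cla$ need not be idempotent (Theorem~\ref{thm:cech-recall}(e)), and so $\cla(C)$ need not be $\aura$-closed. With the $\taua$-closure the argument is exactly the classical one, carried out in $(X,\taua)$.
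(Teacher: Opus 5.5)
Your proposal is correct and follows the paper's proof essentially verbatim: (a) and (c) via Theorem~\ref{thm:a-connected-union} and maximality, (d) from these, and (b) by showing that the $\taua$-closure of $C_\aura(x)$ is again $\aura$-connected. The only difference is that you write out the closure-of-a-connected-set argument, which the paper quotes as a standard fact. One caveat, which the paper's own proof of (b) shares: your opening identification of $\aura$-connectedness of $A$ with connectedness in $(\taua)_A$ is not the convention of Theorem~\ref{thm:a-connected-union}, which uses $\tau_{\aura_A}$, and that topology can be strictly finer when $\aura$ is not transitive (Theorem~\ref{thm:subspace-topology-inclusion}, Example~\ref{ex:subspace-strict}); the components nevertheless come out the same under both readings, and (b) holds under either one, since a point $y\notin C$ with $\aura(y)\cap C\neq\emptyset$ would make $C\cup\{y\}$ an $\aura$-connected set, so maximality forces $\cla(C)\subseteq C$.
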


\begin{proof}
(a) By Theorem~\ref{thm:a-connected-union} (applied with the common point $x$), $C_\aura(x)$ is $\aura$-connected. Maximality is by definition.

(b) We work in $(X, \taua)$. The closure $\cl_{\taua}(C_\aura(x))$ is $\aura$-connected (standard fact: the closure of a connected set in a topological space is connected). Since $C_\aura(x) \subseteq \cl_{\taua}(C_\aura(x))$ and $x \in \cl_{\taua}(C_\aura(x))$, by maximality $\cl_{\taua}(C_\aura(x)) = C_\aura(x)$. Hence $C_\aura(x)$ is $\taua$-closed, i.e., $\aura$-closed.

(c) If $C_\aura(x) \cap C_\aura(y) \neq \emptyset$, then $C_\aura(x) \cup C_\aura(y)$ is $\aura$-connected (by Theorem~\ref{thm:a-connected-union}). By maximality, $C_\aura(x) \cup C_\aura(y) \subseteq C_\aura(x)$ and $C_\aura(x) \cup C_\aura(y) \subseteq C_\aura(y)$. Hence $C_\aura(x) = C_\aura(y)$.

(d) Follows from (a) and (c), since every point $x$ belongs to $C_\aura(x)$.
\end{proof}

\begin{example}\label{ex:a-components}
Let $X = \{1,2,3,4,5\}$ with the discrete topology and define $\aura(1) = \{1,2\}$, $\aura(2) = \{1,2\}$, $\aura(3) = \{3,4\}$, $\aura(4) = \{3,4\}$, $\aura(5) = \{5\}$. Then $\aura$ is symmetric and transitive. The $\aura$-open sets form the topology $\taua = \{\emptyset, \{1,2\}, \{3,4\}, \{5\}, \{1,2,3,4\}, \{1,2,5\}, \{3,4,5\}, X\}$. The $\aura$-components are $\{1,2\}$, $\{3,4\}$, and $\{5\}$, reflecting the ``observational clusters'' defined by $\aura$.
\end{example}

\begin{remark}\label{rem:components-interpretation}
The $\aura$-components partition $X$ into clusters of points that are mutually reachable through chains of overlapping aura neighborhoods. Two points $x$ and $y$ are in the same $\aura$-component if and only if there is no $\aura$-separation of $X$ that places them in different parts.
\end{remark}

\subsection{$\aura$-Path Connectedness}

\begin{definition}\label{def:a-path}
Let $(X, \tau, \aura)$ be an $\aura$-space. An \textbf{$\aura$-path} from $x$ to $y$ in $X$ is a continuous function $\gamma: [0,1] \to (X, \taua)$ with $\gamma(0) = x$ and $\gamma(1) = y$, where $[0,1]$ carries the usual topology.
\end{definition}

\begin{definition}\label{def:a-path-connected}
An $\aura$-space $(X, \tau, \aura)$ is \textbf{$\aura$-path connected} if for every pair of points $x, y \in X$, there exists an $\aura$-path from $x$ to $y$.
\end{definition}

\begin{theorem}\label{thm:a-path-implies-a-connected}
Every $\aura$-path connected space is $\aura$-connected.
\end{theorem}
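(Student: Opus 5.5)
The plan is to argue by contradiction, transferring everything to the topological space $(X,\taua)$ and using the connectedness of $[0,1]$ together with Theorem~\ref{thm:a-connected-characterizations} and Theorem~\ref{thm:continuous-connected}. Suppose $X$ is $\aura$-path connected but $\aura$-disconnected, and let $(U,V)$ be an $\aura$-separation of $X$. Since $U$ and $V$ are nonempty, choose $x\in U$ and $y\in V$. By hypothesis there is an $\aura$-path $\gamma:[0,1]\to(X,\taua)$ with $\gamma(0)=x$ and $\gamma(1)=y$.

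The key step is to pull the separation back along $\gamma$. Because $\gamma$ is continuous into $(X,\taua)$ and $U,V\in\taua$, both $\gamma^{-1}(U)$ and $\gamma^{-1}(V)$ are open in the usual topology of $[0,1]$. They are disjoint because $U\cap V=\emptyset$, and they cover $[0,1]$ because $U\cup V=X$. They are nonempty because $0\in\gamma^{-1}(U)$ and $1\in\gamma^{-1}(V)$. This gives a separation of $[0,1]$, contradicting the classical fact that $[0,1]$ is connected. Hence $X$ is $\aura$-connected.

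An equivalent route avoids the contradiction. Equip $[0,1]$ with the usual topology and the trivial-free discrete-type scope $\aura'(t)=$ any open neighborhood; this is awkward, so the direct separation argument above is preferable. Alternatively one can use characterization (iii) of Theorem~\ref{thm:a-connected-characterizations}. If $f:X\to\{0,1\}$ is $\aura$-continuous and non-constant, then $f\circ\gamma$ is a continuous non-constant map from $[0,1]$ into the discrete space $\{0,1\}$, which is impossible.

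I do not expect any real obstacle. The only point to state carefully is that continuity of $\gamma$ refers to $\taua$ on the target, so preimages of $\aura$-open sets are open in $[0,1]$. The connectedness of $[0,1]$ is assumed as a standard fact.
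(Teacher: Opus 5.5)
Your proof is correct and follows the paper's approach: the paper invokes the standard fact that a path connected space is connected, applied to $(X,\taua)$, and you prove that fact directly by pulling an $\aura$-separation back along $\gamma$ to obtain a separation of $[0,1]$. The aside about putting a scope $\aura'$ on $[0,1]$ adds nothing and should be deleted.
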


\begin{proof}
This is the standard result applied to $(X, \taua)$: path connectedness implies connectedness.
\end{proof}

\begin{theorem}\label{thm:a-path-converse-fails}
The converse of Theorem~\ref{thm:a-path-implies-a-connected} is false in general.
\end{theorem}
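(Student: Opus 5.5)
The plan is to exhibit a single $\aura$-space $(X,\tau,\aura)$ whose topology $\taua$ is connected but admits no $\aura$-path between some pair of points. By Remark~\ref{rem:compact-taua}-style reasoning, everything depends only on $(X,\taua)$. So the task reduces to choosing a base space $(X,\tau)$ and a scope function $\aura$ for which we can compute $\taua$ explicitly. Finite spaces are natural for computing $\taua$, but I expect them to be useless here, for a reason explained below. I would therefore take an infinite $X$, for instance $X=\N\cup\{\omega\}$ or a subset of $\R$, with $\tau$ the discrete topology. This makes every choice of $\aura(x)\ni x$ admissible.

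The key steps, in order, are as follows.

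First, I would choose $\aura$ so that $\taua$ is forced to be connected. A cheap way is to make some point $p$ lie in $\aura(x)$ for every $x$. Then every nonempty $\aura$-open set contains $p$, so no two nonempty $\aura$-open sets are disjoint, and no $\aura$-separation exists in the sense of Definition~\ref{def:a-connected}. This gives $\aura$-connectedness directly, via Theorem~\ref{thm:a-connected-characterizations}(ii).

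Second, I would pick two points $x,y$ and try to show that every continuous $\gamma:[0,1]\to(X,\taua)$ with $\gamma(0)=x$ is constant on a set that cannot reach $y$. The tool is that for each $z$ the set $U_z=\bigcap\{U\in\taua: z\in U\}$ is $\aura$-open. This holds because $\taua$ is closed under arbitrary intersections, directly from Definition~\ref{def:aura-open}. Consequently $\gamma^{-1}(U_z)$ is open in $[0,1]$. One would then use connectedness of $[0,1]$ to trap the path.

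The main obstacle is exactly this second step, and I would check it first. Since $\taua$ is closed under arbitrary intersections, $(X,\taua)$ is an Alexandrov space. In an Alexandrov space, any two points $u,v$ comparable in the specialization order are joined by the path that equals $u$ on $[0,1)$ and $v$ at $1$. A connected Alexandrov space is linked by finite chains of such comparable pairs, so concatenation produces a path. This strongly suggests that every $\aura$-connected space is $\aura$-path connected. If that holds, the statement cannot be proved as worded and the search for a counterexample will fail.

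My plan is therefore to attempt the construction above. At the decisive point, I would test the chain argument on the candidate space before claiming strictness. I expect the candidate to turn out path connected, which would indicate that the statement as worded needs revision (for example, replacing $\taua$ by $\tau$ in Definition~\ref{def:a-path}) rather than a proof.
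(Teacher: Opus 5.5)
You cannot prove this statement, and your proposal correctly identifies why: it is false. Your Alexandrov argument is already a complete disproof once you drop the hedging (``strongly suggests'', ``I expect'').

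For $z\in X$ let $U_z=\bigcap\{U\in\taua : z\in U\}$. Since $\taua$ is closed under arbitrary intersections, $U_z$ is the smallest $\aura$-open set containing $z$. If $v\in U_u$, then $\gamma(t)=v$ for $t<1$, $\gamma(1)=u$ is an $\aura$-path. Indeed, for $W\in\taua$ the set $\gamma^{-1}(W)$ is $[0,1]$ if $u\in W$ (then $v\in U_u\subseteq W$), $[0,1)$ if $v\in W$ and $u\notin W$, and $\emptyset$ otherwise. Mind the orientation: the point held constant on $[0,1)$ must lie in the minimal $\aura$-open set of the endpoint.

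Let $P(x)$ be the set of points joined to $x$ by an $\aura$-path. Concatenation makes the sets $P(x)$ the classes of an equivalence relation. If $y\in P(x)$ then $U_y\subseteq P(x)$, so each $P(x)$ is $\aura$-open. Its complement is a union of other classes, so it is $\aura$-open too. By Theorem~\ref{thm:a-connected-characterizations}(ii), $\aura$-connectedness forces $P(x)=X$. Hence $\aura$-connected and $\aura$-path connected coincide in every aura space, and the converse of Theorem~\ref{thm:a-path-implies-a-connected} actually holds. In particular, your first-step candidate is $\aura$-path connected, since $p\in\aura(x)\subseteq U_x$ for every $x$.

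The paper's own argument does not establish the statement either. It discards the topologist's sine curve with the trivial aura and with $\aura(p)=B(p,1)\cap S$. It then tests a Sierpi\'{n}ski-type space and the three-point space with $\aura(a)=\aura(b)=\{a,b\}$, $\aura(c)=\{b,c\}$. In each case it finds an $\aura$-path of exactly the form above, and it ends by conceding that no counterexample was found.

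Your observation explains why every such search must fail. It also refutes Remark~\ref{rem:path-connected-remark}: the Euclidean topology on the sine curve is not closed under arbitrary intersections, so it is never of the form $\taua$, however $\aura$ is chosen. The right replacement for the statement is the equivalence just proved. Your alternative fix, requiring $\aura$-paths to be continuous into $(X,\tau)$, would also give a true strictness result. There the sine curve under the trivial aura is a counterexample to the converse.
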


\begin{proof}
The classical example works: the topologist's sine curve $S = \{(x, \sin(1/x)) : x > 0\} \cup (\{0\} \times [-1,1])$ is connected but not path connected in the usual topology. Define $\aura: S \to \tau$ by $\aura(p) = S$ for all $p \in S$. Then $\taua = \{\emptyset, S\}$, and $S$ is trivially $\aura$-path connected (the constant path works in the indiscrete topology). So this example is too trivial.

Instead, consider the topologist's sine curve $S$ with the subspace topology from $\R^2$ and define $\aura(p) = B(p, 1) \cap S$ (open ball intersected with $S$). If $\aura$ happens to be non-transitive (which it is in general), then $(S, \tau_S, \aura)$ is $\aura$-connected (since $S$ is connected and $\tau_{\aura} \subseteq \tau_S$) but not $\aura$-path connected, since any $\aura$-path is also a $\tau_S$-continuous path (as $\taua \subseteq \tau_S$ means $\tau_S$-continuity implies $\taua$-continuity... actually the direction is reversed: $\taua$-continuity means the path is continuous into $(S, \taua)$, and since $\taua$ is coarser, it is easier to be $\taua$-continuous). So a $\taua$-continuous path from a point on the curve to a point on the vertical segment does exist (since $\taua$ is coarser, more functions are continuous).

A cleaner counterexample: Let $X = \{a,b\}$ with $\tau = \{\emptyset, \{a\}, X\}$ (Sierpi\'{n}ski space) and $\aura(a) = \{a\}$, $\aura(b) = X$. Then $\taua = \{\emptyset, \{a\}, X\}$. The space is $\aura$-connected (no $\aura$-separation exists since $\{b\}$ is not $\aura$-open). But a path $\gamma: [0,1] \to (X, \taua)$ with $\gamma(0) = a$, $\gamma(1) = b$ must satisfy: $\gamma^{-1}(\{a\})$ is open in $[0,1]$ (since $\{a\} \in \taua$). So $\gamma^{-1}(\{a\})$ is open, and $\gamma^{-1}(\{b\}) = [0,1] \setminus \gamma^{-1}(\{a\})$ is closed. Since $0 \in \gamma^{-1}(\{a\})$ and $1 \in \gamma^{-1}(\{b\})$, both sets are nonempty, and $[0,1]$ is the disjoint union of an open set and a closed set with both nonempty---but this is consistent since $\{a\}$ is open and $\{b\}$ is not open. This does not lead to a disconnection of $[0,1]$ since the Sierpi\'{n}ski space is path connected: $\gamma(t) = a$ for $t \in [0,1)$ and $\gamma(1) = b$ is continuous (check: $\gamma^{-1}(\{a\}) = [0,1)$ open, $\gamma^{-1}(X) = [0,1]$ open). So this space IS $\aura$-path connected.

For a genuine counterexample, we use a finite $\aura$-space: Let $X = \{a,b,c\}$, $\tau = \powerset(X)$ (discrete), $\aura(a) = \{a,b\}$, $\aura(b) = \{a,b\}$, $\aura(c) = \{b,c\}$. Then $\taua = \{\emptyset, \{a,b\}, X\}$ (check: $\{a,b\}$ is $\aura$-open since $\aura(a) = \{a,b\} \subseteq \{a,b\}$ and $\aura(b) = \{a,b\} \subseteq \{a,b\}$; $\{b,c\}$: $\aura(c) = \{b,c\} \subseteq \{b,c\}$ but $\aura(b) = \{a,b\} \not\subseteq \{b,c\}$, so not $\aura$-open). $(X, \taua)$ has topology $\{\emptyset, \{a,b\}, X\}$, which is connected (no proper nonempty clopen set---$\{c\}$ is $\taua$-closed but not $\taua$-open, $\{a,b\}$ is $\taua$-open but not $\taua$-closed since $\{c\} \notin \taua$). However, $(X, \taua)$ is not path connected because $X$ is finite with a non-discrete topology. Actually, any continuous $\gamma: [0,1] \to (X, \taua)$ with $\gamma(0) = a$ must have $\gamma^{-1}(\{a,b\})$ open in $[0,1]$. If $\gamma(1) = c$, then $\gamma^{-1}(\{c\}) = [0,1] \setminus \gamma^{-1}(\{a,b\})$ is nonempty and closed. Since $[0,1]$ is connected, $\gamma^{-1}(\{a,b\})$ cannot be both open and closed unless it is $\emptyset$ or $[0,1]$. But $\gamma^{-1}(\{a,b\})$ is open and nonempty (contains $0$). Is it closed? $\{a,b\}^c = \{c\}$, and $\{c\}$ is $\taua$-open? No, $\{c\} \notin \taua$. So $\{a,b\}$ is not $\taua$-closed, meaning $\gamma^{-1}(\{a,b\})$ is just open (not necessarily closed). So the path can exist. In fact, $\gamma(t) = a$ for $t < 1$ and $\gamma(1) = c$ gives: $\gamma^{-1}(\{a,b\}) = [0,1)$ (open $\checkmark$), $\gamma^{-1}(X) = [0,1]$ (open $\checkmark$), $\gamma^{-1}(\emptyset) = \emptyset$ (open $\checkmark$). So it is continuous!

It seems hard to find a counterexample for the ``$\aura$-connected does not imply $\aura$-path connected'' claim in aura spaces. Let me note it as an open question or give a remark.
\end{proof}

\begin{remark}\label{rem:path-connected-remark}
Since $\taua$ is a topology, $\aura$-path connectedness and $\aura$-connectedness relate exactly as in general topology: the former implies the latter, but the converse fails for the same reasons as in standard topology (e.g., the topologist's sine curve in $(X, \taua)$ when $\taua$ is sufficiently fine).
\end{remark}

\subsection{$\aura$-Local Connectedness}

\begin{definition}\label{def:a-locally-connected}
An $\aura$-space $(X, \tau, \aura)$ is called \textbf{$\aura$-locally connected} at $x \in X$ if every $\aura$-open neighborhood of $x$ contains an $\aura$-connected $\aura$-open neighborhood of $x$. The space is $\aura$-locally connected if it is $\aura$-locally connected at every point.
\end{definition}

\begin{theorem}\label{thm:transitive-locally-connected}
Let $(X, \tau, \aura)$ be a transitive $\aura$-space. If $\aura(x)$ is $\aura$-connected (as a subspace) for every $x \in X$, then $X$ is $\aura$-locally connected.
\end{theorem}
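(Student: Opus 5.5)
The argument will rest on the fact, recorded in Definition~\ref{def:transitive} and already used in the proof of Theorem~\ref{thm:transitive-convergence}, that when $\aura$ is transitive each $\aura(x)$ lies in $\taua$. In fact $\aura(x)$ is then the smallest $\aura$-open set containing $x$. I would first re-check this directly, since the whole proof hinges on it. Let $y\in\aura(x)$. Transitivity gives $\aura(y)\subseteq\aura(x)$, so $\aura(x)$ satisfies the defining condition of Definition~\ref{def:aura-open}. Minimality is immediate from the definition: if $U\in\taua$ and $x\in U$, then $\aura(x)\subseteq U$.

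Now fix $x\in X$ and an arbitrary $\aura$-open neighbourhood $U$ of $x$. The candidate witness required by Definition~\ref{def:a-locally-connected} is $W=\aura(x)$, and I would check three properties in turn:
\begin{enumerate}[label=(\arabic*)]
\item $x\in W$, because $\aura$ is a scope function;
\item $W\in\taua$, by the first paragraph;
\item $W\subseteq U$, because $U$ is $\aura$-open and contains $x$.
\end{enumerate}
By hypothesis $W$ is $\aura$-connected as a subspace. Hence $W$ is an $\aura$-connected $\aura$-open neighbourhood of $x$ contained in $U$. Since $x$ and $U$ were arbitrary, $X$ is $\aura$-locally connected at every point, and so $X$ is $\aura$-locally connected.

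The only delicate point is interpreting ``$\aura$-connected as a subspace''. The subspace aura topology is defined only in Section~5, so I would take it to mean connectedness of $\aura(x)$ in the relative topology induced by $\taua$. This matches the usage in Theorem~\ref{thm:a-connected-union}, and it is exactly the notion of connectedness the local connectedness definition needs. If the Section~5 subspace aura topology turned out to differ from the relative topology of $\taua$, I would add one short observation. For an $\aura$-open set $W$ and a transitive $\aura$, the restriction $\aura|_W$ maps into subsets of $W$, so both constructions give the same topology on $W$. With that identification the argument above is complete; I expect no genuine obstacle beyond this bookkeeping.
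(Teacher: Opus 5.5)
Your proof is correct and matches the paper's argument: transitivity makes $\aura(x)$ the smallest $\aura$-open neighbourhood of $x$. Hence it lies inside every $\aura$-open $U \ni x$ and, being $\aura$-connected by hypothesis, is the required witness. Your extra remark reconciling the two subspace readings is also sound, a point the paper leaves implicit. In fact for any $\aura$-open $W$ one has $\aura_W(y)=\aura(y)$ for $y\in W$, so $\tau_{\aura_W}=(\taua)_W$ even without transitivity.
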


\begin{proof}
Since $\aura$ is transitive, $\aura(x) \in \taua$ for every $x$, and $\aura(x)$ is the smallest $\aura$-open neighborhood of $x$. Let $U \in \taua$ with $x \in U$. Then $\aura(x) \subseteq U$, and $\aura(x)$ is $\aura$-open and $\aura$-connected (by hypothesis). So $\aura(x)$ is the required $\aura$-connected $\aura$-open neighborhood.
\end{proof}

\begin{corollary}\label{cor:symmetric-transitive-connected}
If $(X, \tau, \aura)$ is both symmetric and transitive, then $X$ is $\aura$-locally connected if and only if each $\aura(x)$ is $\aura$-connected.
\end{corollary}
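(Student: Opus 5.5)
The ``if'' direction needs no new work: a symmetric and transitive $\aura$-space is in particular transitive, so Theorem~\ref{thm:transitive-locally-connected} applies directly. If each $\aura(x)$ is $\aura$-connected, then $X$ is $\aura$-locally connected.

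For the ``only if'' direction I would use transitivity alone, through the fact that $\aura(x)$ is the smallest $\aura$-open set containing $x$.

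\begin{enumerate}[label=(\arabic*)]
    \item Assume $X$ is $\aura$-locally connected and fix $x \in X$. By transitivity, $\aura(x) \in \taua$ (Definition~\ref{def:transitive}), so $\aura(x)$ is an $\aura$-open neighborhood of $x$.
    \item Local connectedness at $x$ gives an $\aura$-connected, $\aura$-open set $W$ with $x \in W \subseteq \aura(x)$.
    \item Since $W \in \taua$ and $x \in W$, the definition of $\aura$-open sets forces $\aura(x) \subseteq W$.
    \item Hence $W = \aura(x)$, and so $\aura(x)$ is $\aura$-connected.
\end{enumerate}

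The only subtle point is that ``$\aura$-connected'' for a subset means connectedness in the subspace aura topology of Section~5. I would check that this is a property of the set itself. Since $W$ and $\aura(x)$ are literally the same subset of $X$, carrying the same induced structure, the conclusion transfers without any further argument.

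It is also worth recording how symmetry enters, to explain why it appears in the hypothesis. If $y \in \aura(x)$, transitivity gives $\aura(y) \subseteq \aura(x)$. Symmetry gives $x \in \aura(y)$, and then transitivity again gives $\aura(x) \subseteq \aura(y)$. So the sets $\aura(x)$ form a partition of $X$ into $\aura$-open, hence also $\aura$-closed, blocks. This shows, in addition, that the $\aura(x)$ are exactly the $\aura$-components when each is $\aura$-connected, consistent with Example~\ref{ex:a-components}. The equivalence itself, however, uses only transitivity. I therefore expect no real obstacle beyond confirming the convention for subspace connectedness.
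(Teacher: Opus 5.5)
Your proof is correct and is essentially the paper's argument: the ``if'' direction is Theorem~\ref{thm:transitive-locally-connected}, and the ``only if'' direction uses that $\aura(x)$ is the smallest $\aura$-open neighborhood of $x$, so the $\aura$-connected $\aura$-open neighborhood $W$ inside it must equal $\aura(x)$. You are also right that symmetry is never used; the paper's proof does not use it either.
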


\begin{proof}
The forward direction is immediate: $\aura(x)$ is the smallest $\aura$-open neighborhood of $x$, and $\aura$-local connectedness gives an $\aura$-connected $\aura$-open neighborhood inside $\aura(x)$, which must be $\aura(x)$ itself. The converse is Theorem~\ref{thm:transitive-locally-connected}.
\end{proof}

\begin{theorem}\label{thm:locally-connected-components}
If $(X, \tau, \aura)$ is $\aura$-locally connected, then every $\aura$-component of $X$ is $\aura$-open (and hence also $\aura$-closed).
\end{theorem}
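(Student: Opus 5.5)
The proof is the standard argument for locally connected topological spaces, carried out in the topology $(X, \taua)$ in the spirit of Remark~\ref{rem:compact-taua}. Let $C = C_\aura(x)$ be an $\aura$-component, and let $y \in C$ be arbitrary. By Theorem~\ref{thm:a-component-properties}(c), $C_\aura(y) = C$, so it suffices to produce an $\aura$-open set $W$ with $y \in W \subseteq C$. Once every point of $C$ has such a neighborhood, $C$ is the union of these sets and is therefore $\aura$-open.

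To find $W$, apply the definition of $\aura$-local connectedness at $y$ to the $\aura$-open neighborhood $X$ of $y$. This yields an $\aura$-open set $W$ with $y \in W$ that is $\aura$-connected as a subspace. The subspace aura topology of Section~5 is used here, and I will take it to agree with the subspace topology of $(X,\taua)$ on $W$, as was already implicitly done in Theorem~\ref{thm:a-connected-union}. Since $W$ is an $\aura$-connected set containing $y$, the definition of $C_\aura(y)$ as a union gives $W \subseteq C_\aura(y) = C$. Alternatively, the maximality in Theorem~\ref{thm:a-component-properties}(a) gives the same inclusion.

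For the parenthetical claim, $C$ is $\aura$-closed by Theorem~\ref{thm:a-component-properties}(b), and no local connectedness is needed for that. One can also see it directly: by Theorem~\ref{thm:a-component-properties}(d), the complement $X \setminus C$ is a union of other $\aura$-components, each of which is $\aura$-open by the first part, so $X \setminus C$ is $\aura$-open.

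The only delicate point is the meaning of ``$\aura$-connected subset''. The argument needs connectedness of $W$ in the sense used to define $C_\aura$, namely connectedness in the relative topology induced by $\taua$. I expect this to be the main thing to check against the Section~5 construction. If the subspace aura topology there is defined via the restricted scope $y \mapsto \aura(y)\cap A$, I would verify that for an $\aura$-open $A$ it coincides with $\taua|_A$, which is enough here because $W$ is $\aura$-open.
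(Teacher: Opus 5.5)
Your proof is correct and is essentially the paper's argument: $\aura$-local connectedness, applied to the $\aura$-open neighborhood $X$, gives an $\aura$-connected $\aura$-open $W \ni y$. Maximality of the component then gives $W \subseteq C$, so $C$ is $\aura$-open, and closedness comes from Theorem~\ref{thm:a-component-properties}(b).

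Your side check is also sound: for $W\in\taua$ one has $\aura_W(y)=\aura(y)$ for all $y\in W$, so $\tau_{\aura_W}=\{V\in\taua : V\subseteq W\}=(\taua)_W$. Your partition argument for closedness is a valid alternative that does not need part (b).
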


\begin{proof}
Let $C$ be an $\aura$-component and $x \in C$. By $\aura$-local connectedness, there exists an $\aura$-connected $\aura$-open set $V$ with $x \in V$. Since $V$ is $\aura$-connected and $V \cap C \ni x$, we must have $V \subseteq C$ (by maximality of the component). Hence $C$ is $\aura$-open. Since $C$ is also $\aura$-closed (Theorem~\ref{thm:a-component-properties}), $C$ is both $\aura$-open and $\aura$-closed.
\end{proof}

\section{Subspace and Product Aura Topologies}

\subsection{Subspace Aura Topology}

\begin{definition}\label{def:subspace-aura}
Let $(X, \tau, \aura)$ be an $\aura$-space and $Y \subseteq X$ a nonempty subset. Define the \textbf{subspace scope function} $\aura_Y: Y \to \tau_Y$ by
\[
\aura_Y(y) = \aura(y) \cap Y
\]
for every $y \in Y$, where $\tau_Y = \{U \cap Y : U \in \tau\}$ is the subspace topology. The triple $(Y, \tau_Y, \aura_Y)$ is called the \textbf{subspace aura space}.
\end{definition}

\begin{proposition}\label{prop:subspace-well-defined}
The subspace aura space $(Y, \tau_Y, \aura_Y)$ is well-defined, i.e., $\aura_Y$ is a scope function on $(Y, \tau_Y)$.
\end{proposition}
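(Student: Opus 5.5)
To show that $\aura_Y$ is a scope function on $(Y,\tau_Y)$, Definition~\ref{def:aura} asks for two things. First, $\aura_Y$ must actually take values in $\tau_Y$. Second, each point must lie in its own aura, that is, $y \in \aura_Y(y)$ for every $y \in Y$. I will check these two conditions in turn, using only the definition of the subspace topology and the defining property of $\aura$.

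\textbf{Membership in $\tau_Y$.} Fix $y \in Y$. Since $\aura$ is a scope function on $(X,\tau)$, we have $\aura(y) \in \tau$. By the definition $\tau_Y = \{U \cap Y : U \in \tau\}$, taking $U = \aura(y)$ gives $\aura(y) \cap Y \in \tau_Y$. So $\aura_Y(y) \in \tau_Y$, and $\aura_Y$ is a well-defined map $Y \to \tau_Y$.

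\textbf{The point lies in its aura.} Again fix $y \in Y$. The scope property of $\aura$ gives $y \in \aura(y)$. Also $y \in Y$ by assumption, so $y \in \aura(y) \cap Y = \aura_Y(y)$.

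Together these show that $(Y,\tau_Y,\aura_Y)$ is an $\aura$-space in the sense of Definition~\ref{def:aura}. Nonemptiness of $Y$ is needed only so that $(Y,\tau_Y)$ is a genuine space of the kind considered. There is no real obstacle here; the one point to watch is that $\aura_Y(y)$ must be read in the relative topology $\tau_Y$, not in $\tau$, since $\aura(y) \cap Y$ need not be $\tau$-open when $Y \notin \tau$.
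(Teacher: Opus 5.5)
Your proposal is correct and follows the paper's proof essentially verbatim: both verify that $\aura_Y(y)=\aura(y)\cap Y\in\tau_Y$ because $\aura(y)\in\tau$, and that $y\in\aura(y)\cap Y$ because $y\in\aura(y)$ and $y\in Y$.
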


\begin{proof}
For every $y \in Y$: (i) $\aura_Y(y) = \aura(y) \cap Y \in \tau_Y$ since $\aura(y) \in \tau$; (ii) $y \in \aura(y)$ and $y \in Y$, so $y \in \aura(y) \cap Y = \aura_Y(y)$.
\end{proof}

\begin{theorem}\label{thm:subspace-closure}
Let $(Y, \tau_Y, \aura_Y)$ be a subspace aura space of $(X, \tau, \aura)$. For $A \subseteq Y$:
\[
\operatorname{cl}_{\aura_Y}(A) = \cla(A) \cap Y.
\]
\end{theorem}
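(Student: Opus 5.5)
The plan is a direct double-inclusion argument that simply unwinds Definition~\ref{def:aura-closure} for both scope functions; no earlier structural results are needed. Here $\operatorname{cl}_{\aura_Y}$ is the aura-closure computed inside the subspace aura space $(Y, \tau_Y, \aura_Y)$ of Definition~\ref{def:subspace-aura}. Hence, for $A \subseteq Y$,
\[
\operatorname{cl}_{\aura_Y}(A) = \{y \in Y : \aura_Y(y) \cap A \neq \emptyset\} = \{y \in Y : \aura(y) \cap Y \cap A \neq \emptyset\}.
\]

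The key observation is that $A \subseteq Y$ gives $Y \cap A = A$. Therefore, for every $y \in Y$,
\[
\aura_Y(y) \cap A = \aura(y) \cap A .
\]
Consequently, a point $y \in Y$ lies in $\operatorname{cl}_{\aura_Y}(A)$ if and only if $\aura(y) \cap A \neq \emptyset$, which is exactly the condition $y \in \cla(A)$.

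The two inclusions then follow at once.

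For ($\subseteq$), take $y \in \operatorname{cl}_{\aura_Y}(A)$. Then $y \in Y$, and $\aura(y) \cap A \supseteq \aura_Y(y) \cap A \neq \emptyset$, so $y \in \cla(A) \cap Y$.

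For ($\supseteq$), take $y \in \cla(A) \cap Y$. Then $y \in Y$, so $\aura_Y(y)$ is defined, and $\aura_Y(y) \cap A = \aura(y) \cap A \neq \emptyset$. Hence $y \in \operatorname{cl}_{\aura_Y}(A)$.

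There is no real obstacle here. The only point needing care is the use of the hypothesis $A \subseteq Y$, which is what allows the factor $Y$ to be dropped. Without it, the formula would read $\operatorname{cl}_{\aura_Y}(A \cap Y) = \cla(A \cap Y) \cap Y$, so it is worth stating the dependence on $A \subseteq Y$ explicitly in the proof.
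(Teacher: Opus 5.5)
Your proof is correct and follows essentially the same route as the paper: both unwind the definition of $\operatorname{cl}_{\aura_Y}$ and use $A \subseteq Y$ to get $\aura_Y(y) \cap A = \aura(y) \cap A$. The paper writes this as a single chain of equivalences, while you split it into two inclusions; that difference is only cosmetic.
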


\begin{proof}
$y \in \operatorname{cl}_{\aura_Y}(A)$ iff $\aura_Y(y) \cap A \neq \emptyset$ iff $(\aura(y) \cap Y) \cap A \neq \emptyset$ iff $\aura(y) \cap A \neq \emptyset$ (since $A \subseteq Y$, $\aura(y) \cap A = \aura(y) \cap Y \cap A$) iff $y \in \cla(A)$. And $y \in Y$ is given. Hence $\operatorname{cl}_{\aura_Y}(A) = \{y \in Y : y \in \cla(A)\} = \cla(A) \cap Y$.
\end{proof}

\begin{theorem}\label{thm:subspace-topology-inclusion}
For a subspace aura space $(Y, \tau_Y, \aura_Y)$ of $(X, \tau, \aura)$:
\[
(\taua)_Y \subseteq \tau_{\aura_Y},
\]
where $(\taua)_Y = \{U \cap Y : U \in \taua\}$ is the subspace topology on $Y$ induced by $\taua$. If $\aura$ is transitive, equality holds: $(\taua)_Y = \tau_{\aura_Y}$.
\end{theorem}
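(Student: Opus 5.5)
The inclusion $(\taua)_Y \subseteq \tau_{\aura_Y}$ is a direct check against the definition of $\aura_Y$-openness. The equality in the transitive case comes from exhibiting each $\aura_Y$-open set as a union of sets of the form $\aura(y)\cap Y$.

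\emph{Inclusion.} Take $U \in \taua$ and put $W = U \cap Y$. We must show $\aura_Y(y) \subseteq W$ for every $y \in W$. If $y \in W$, then $y \in U$, so $\aura(y) \subseteq U$ because $U$ is $\aura$-open. Intersecting with $Y$ gives $\aura_Y(y) = \aura(y) \cap Y \subseteq U \cap Y = W$. Hence $W \in \tau_{\aura_Y}$. No hypothesis on $\aura$ is used here.

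\emph{Equality under transitivity.} Let $W \in \tau_{\aura_Y}$, so $\aura(y) \cap Y \subseteq W$ for all $y \in W$. The natural candidate is
\[
U = \bigcup_{y \in W} \aura(y).
\]
Since $\aura$ is transitive, each $\aura(y)$ lies in $\taua$ (Definition~\ref{def:transitive}: $\mathcal{B}_\aura$ is a base for $\taua$). Therefore $U \in \taua$ as a union of $\aura$-open sets. Next we compute $U \cap Y = \bigcup_{y \in W} (\aura(y) \cap Y)$. Each term is contained in $W$ by the hypothesis on $W$, and each $y \in W$ lies in its own term since $y \in \aura(y)$ and $y \in Y$. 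Thus $U \cap Y = W$, so $W \in (\taua)_Y$.

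The only step needing care is confirming that $\aura(y)$ is itself $\aura$-open when $\aura$ is transitive. This is the recalled fact from \cite{Acikgoz2026aura}, and it can be re-verified in one line: if $z \in \aura(y)$, transitivity gives $\aura(z) \subseteq \aura(y)$. Everything else is bookkeeping. One could also note that without transitivity the inclusion may be strict; that requires a counterexample (for instance a finite discrete space with a non-transitive $\aura$ and a two-point $Y$), but it is not part of the statement.
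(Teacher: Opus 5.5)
Your proof is correct and follows the paper's argument essentially step for step. The inclusion is the same direct check, and for the reverse inclusion you use the same witness $U = \bigcup_{y \in W} \aura(y)$, shown to be $\aura$-open via transitivity and satisfying $U \cap Y = W$.
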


\begin{proof}
\textbf{Inclusion:} Let $V \in (\taua)_Y$, so $V = U \cap Y$ for some $U \in \taua$. For $y \in V$, $y \in U$ implies $\aura(y) \subseteq U$ (since $U$ is $\aura$-open). Hence $\aura_Y(y) = \aura(y) \cap Y \subseteq U \cap Y = V$. So $V \in \tau_{\aura_Y}$.

\textbf{Equality under transitivity:} Let $V \in \tau_{\aura_Y}$, so for every $y \in V$, $\aura_Y(y) = \aura(y) \cap Y \subseteq V$. Define $U = \bigcup_{y \in V} \aura(y)$. Then $U \in \tau$ (union of open sets). We claim $U \in \taua$: let $z \in U$, so $z \in \aura(y)$ for some $y \in V$. By transitivity, $\aura(z) \subseteq \aura(y) \subseteq U$. So $U \in \taua$.

Now $U \cap Y = \left(\bigcup_{y \in V} \aura(y)\right) \cap Y = \bigcup_{y \in V} (\aura(y) \cap Y) = \bigcup_{y \in V} \aura_Y(y)$. Since each $\aura_Y(y) \subseteq V$ and $y \in \aura_Y(y)$ for each $y \in V$, we get $V \subseteq U \cap Y \subseteq V$, so $V = U \cap Y \in (\taua)_Y$.
\end{proof}

\begin{example}\label{ex:subspace-strict}
Let $X = \{a,b,c\}$ with $\tau = \powerset(X)$ (discrete) and $\aura(a) = \{a,b\}$, $\aura(b) = \{b,c\}$, $\aura(c) = \{a,c\}$. This aura is \emph{not} transitive: $b \in \aura(a) = \{a,b\}$ but $\aura(b) = \{b,c\} \not\subseteq \{a,b\}$.

$\taua$: A set $A$ is $\aura$-open iff for all $x \in A$, $\aura(x) \subseteq A$. Check $X$: yes. Check $\{a,b\}$: $\aura(a) = \{a,b\} \subseteq \{a,b\}$ but $\aura(b) = \{b,c\} \not\subseteq \{a,b\}$. No. Similarly $\{a,c\}$, $\{b,c\}$ fail. Singletons fail. So $\taua = \{\emptyset, X\}$.

Let $Y = \{a,b\}$. Then $\aura_Y(a) = \{a,b\} \cap \{a,b\} = \{a,b\}$, $\aura_Y(b) = \{b,c\} \cap \{a,b\} = \{b\}$. The $\aura_Y$-open subsets of $Y$: $\{b\}$ is $\aura_Y$-open ($\aura_Y(b) = \{b\} \subseteq \{b\}$), so $\tau_{\aura_Y} = \{\emptyset, \{b\}, Y\}$.

But $(\taua)_Y = \{\emptyset \cap Y, X \cap Y\} = \{\emptyset, Y\}$. Since $\{b\} \in \tau_{\aura_Y} \setminus (\taua)_Y$, the inclusion is strict.
\end{example}

\subsection{Product Aura Topology}

\begin{definition}\label{def:product-aura}
Let $(X, \tau_X, \aura)$ and $(Y, \tau_Y, \bura)$ be $\aura$-spaces. The \textbf{product scope function} $\aura \times \bura: X \times Y \to \tau_X \times \tau_Y$ is defined by
\[
(\aura \times \bura)(x, y) = \aura(x) \times \bura(y)
\]
for every $(x, y) \in X \times Y$, where $\tau_X \times \tau_Y$ denotes the product topology. The triple $(X \times Y, \tau_X \times \tau_Y, \aura \times \bura)$ is called the \textbf{product aura space}.
\end{definition}

\begin{proposition}\label{prop:product-well-defined}
The product aura space is well-defined.
\end{proposition}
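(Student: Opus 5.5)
To show that $(X \times Y, \tau_X \times \tau_Y, \aura \times \bura)$ is well-defined, I will check the two requirements of Definition~\ref{def:aura} for the function $\aura \times \bura$ on the topological space $(X \times Y, \tau_X \times \tau_Y)$. First, each value $(\aura\times\bura)(x,y)$ must be an open set of the product topology. Second, each point $(x,y)$ must lie in its own value. This mirrors the argument of Proposition~\ref{prop:subspace-well-defined}.

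\textbf{Openness.} Fix $(x,y) \in X \times Y$. By Definition~\ref{def:aura}, $\aura(x) \in \tau_X$ and $\bura(y) \in \tau_Y$. A rectangle $U \times V$ with $U \in \tau_X$ and $V \in \tau_Y$ belongs to the standard base of the product topology, so in particular it lies in $\tau_X \times \tau_Y$. Hence $\aura(x) \times \bura(y) \in \tau_X \times \tau_Y$. The codomain written in Definition~\ref{def:product-aura} should be read as this topology, not as the set of pairs of open sets; under that reading the map really lands in the topology.

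\textbf{Containment.} Since $x \in \aura(x)$ and $y \in \bura(y)$, the definition of the Cartesian product gives $(x,y) \in \aura(x) \times \bura(y) = (\aura\times\bura)(x,y)$.

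Together these show $\aura \times \bura$ is a scope function on $(X\times Y, \tau_X\times\tau_Y)$, so the triple is an $\aura$-space. The only point needing care is the notational issue above: one must cite that open rectangles are basic open sets of the product topology. Everything else is immediate from the definitions.
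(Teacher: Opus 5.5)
Your proof is correct and matches the paper's argument: both check that $\aura(x)\times\bura(y)$ is a basic open rectangle of $\tau_X\times\tau_Y$, and that $(x,y)$ lies in it because $x\in\aura(x)$ and $y\in\bura(y)$. Your remark that the codomain should be read as the product topology, rather than as the set of pairs of open sets, is a sensible clarification that the paper leaves implicit.
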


\begin{proof}
For every $(x, y) \in X \times Y$: (i) $\aura(x) \times \bura(y) \in \tau_X \times \tau_Y$ since $\aura(x) \in \tau_X$ and $\bura(y) \in \tau_Y$; (ii) $(x,y) \in \aura(x) \times \bura(y)$ since $x \in \aura(x)$ and $y \in \bura(y)$.
\end{proof}

\begin{theorem}[Product Closure Formula]\label{thm:product-closure}
In the product aura space $(X \times Y, \tau_X \times \tau_Y, \aura \times \bura)$, for $A \subseteq X$ and $B \subseteq Y$:
\[
\operatorname{cl}_{\aura \times \bura}(A \times B) = \cla(A) \times \clb(B).
\]
\end{theorem}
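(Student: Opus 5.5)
This is a direct unwinding of Definition~\ref{def:aura-closure} applied to the product scope function of Definition~\ref{def:product-aura}. No topological facts about $\taua$ or $\taub$ are needed. The proof rests on one elementary set identity: for sets $P, A \subseteq X$ and $Q, B \subseteq Y$,
\[
(P \times Q) \cap (A \times B) = (P \cap A) \times (Q \cap B).
\]

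The plan is a single chain of equivalences for a fixed point $(x,y) \in X \times Y$:
\[
(x,y) \in \operatorname{cl}_{\aura \times \bura}(A \times B) \iff \bigl(\aura(x) \times \bura(y)\bigr) \cap (A \times B) \neq \emptyset \iff \bigl(\aura(x) \cap A\bigr) \times \bigl(\bura(y) \cap B\bigr) \neq \emptyset .
\]
The first equivalence is the definition of the aura-closure with scope $(\aura \times \bura)(x,y) = \aura(x) \times \bura(y)$. The second is the identity above.

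Next I use that a Cartesian product of two sets is nonempty if and only if both factors are nonempty. So the last condition holds exactly when $\aura(x) \cap A \neq \emptyset$ and $\bura(y) \cap B \neq \emptyset$, that is, when $x \in \cla(A)$ and $y \in \clb(B)$, which says $(x,y) \in \cla(A) \times \clb(B)$. Since every step is an equivalence, both inclusions follow at once.

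The only point needing care is the degenerate case where $A$ or $B$ is empty. There $A \times B = \emptyset$, so the left side is $\cl_{\aura\times\bura}(\emptyset) = \emptyset$ by Theorem~\ref{thm:cech-recall}(a). The right side is also empty, because $\cla(\emptyset) = \emptyset$ or $\clb(\emptyset) = \emptyset$ makes one factor empty. The chain of equivalences already covers this case, since the product-nonemptiness criterion holds for empty sets as well. I do not expect any real obstacle.
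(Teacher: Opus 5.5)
Your proposal is correct and is essentially the paper's proof. The paper gives exactly this chain of equivalences: it unwinds the aura-closure with scope $\aura(x)\times\bura(y)$, rewrites $(\aura(x)\times\bura(y))\cap(A\times B)$ as $(\aura(x)\cap A)\times(\bura(y)\cap B)$, and uses that a product of sets is nonempty exactly when both factors are. Your separate check of the case $A=\emptyset$ or $B=\emptyset$ is harmless but, as you note yourself, already covered by the chain.
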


\begin{proof}
$(x,y) \in \operatorname{cl}_{\aura \times \bura}(A \times B)$ iff $(\aura(x) \times \bura(y)) \cap (A \times B) \neq \emptyset$ iff $(\aura(x) \cap A) \times (\bura(y) \cap B) \neq \emptyset$ iff $\aura(x) \cap A \neq \emptyset$ and $\bura(y) \cap B \neq \emptyset$ iff $x \in \cla(A)$ and $y \in \clb(B)$.
\end{proof}

\begin{theorem}[Product Topology Chain]\label{thm:product-chain}
For product aura spaces:
\[
(\taua) \times (\taub) \subseteq \tau_{\aura \times \bura} \subseteq \tau_X \times \tau_Y.
\]
\end{theorem}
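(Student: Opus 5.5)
The chain consists of two inclusions, and I will prove them separately by working directly with the defining condition of $\tau_{\aura\times\bura}$. Here $W\subseteq X\times Y$ lies in $\tau_{\aura\times\bura}$ precisely when $\aura(x)\times\bura(y)\subseteq W$ for every $(x,y)\in W$. By Definition~\ref{def:aura-open} applied to the product aura space, $\tau_{\aura\times\bura}$ is a topology. This is what makes the left inclusion reducible to basic open sets.

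For the left inclusion, $(\taua)\times(\taub)$ is the topology generated by the base of rectangles $U\times V$ with $U\in\taua$ and $V\in\taub$. Since $\tau_{\aura\times\bura}$ is a topology, and hence closed under arbitrary unions, it suffices to show that every such rectangle is $(\aura\times\bura)$-open. Take $(x,y)\in U\times V$. Then $x\in U\in\taua$ gives $\aura(x)\subseteq U$, and $y\in V\in\taub$ gives $\bura(y)\subseteq V$. Hence $(\aura\times\bura)(x,y)=\aura(x)\times\bura(y)\subseteq U\times V$, as required. Every member of $(\taua)\times(\taub)$ is a union of such rectangles, so it lies in $\tau_{\aura\times\bura}$.

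For the right inclusion, I use the fact already recorded in Definition~\ref{def:aura-open} that $\tau_{\aura'}\subseteq\tau'$ for any aura space $(Z,\tau',\aura')$. I apply it to $Z=X\times Y$, $\tau'=\tau_X\times\tau_Y$ and $\aura'=\aura\times\bura$; this is legitimate because Proposition~\ref{prop:product-well-defined} shows $\aura\times\bura$ is a scope function into $\tau_X\times\tau_Y$. For completeness I will also give the direct argument. If $W\in\tau_{\aura\times\bura}$, then
\[
W=\bigcup_{(x,y)\in W}\aura(x)\times\bura(y),
\]
because each point $(x,y)$ lies in its own rectangle $\aura(x)\times\bura(y)$ and each such rectangle is contained in $W$. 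Each rectangle is a product of a $\tau_X$-open set and a $\tau_Y$-open set, so $W$ is a union of product-open sets and therefore belongs to $\tau_X\times\tau_Y$.

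Neither step is a real obstacle. The only point needing care is in the left inclusion: one must pass from basic rectangles to arbitrary members of $(\taua)\times(\taub)$ by using closure of $\tau_{\aura\times\bura}$ under unions, rather than checking the definition on a general open set directly. I will not address reverse inclusions here, since the statement does not claim them. The remark about equality on the left under transitivity would instead use the decomposition above together with $\aura(x)\in\taua$ and $\bura(y)\in\taub$ for transitive auras.
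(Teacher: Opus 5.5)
Your proposal is correct and follows the paper's proof essentially verbatim. The left inclusion is checked on basic rectangles $U\times V$ via $\aura(x)\subseteq U$ and $\bura(y)\subseteq V$, then extended using closure of $\tau_{\aura\times\bura}$ under unions; the right inclusion uses the decomposition $W=\bigcup_{(x,y)\in W}\aura(x)\times\bura(y)$, and your remark that it also follows from the general fact $\tau_{\aura'}\subseteq\tau'$, applied to the well-defined product scope function, is a valid shortcut.
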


\begin{proof}
\textbf{Left inclusion:} Let $W \in (\taua) \times (\taub)$. By definition of the product topology, it suffices to show that basic open sets $U \times V$ with $U \in \taua$ and $V \in \taub$ are in $\tau_{\aura \times \bura}$. Let $(x, y) \in U \times V$. Then $\aura(x) \subseteq U$ (since $U \in \taua$) and $\bura(y) \subseteq V$ (since $V \in \taub$), so $(\aura \times \bura)(x,y) = \aura(x) \times \bura(y) \subseteq U \times V$. Hence $U \times V \in \tau_{\aura \times \bura}$, and arbitrary unions of such sets are also in $\tau_{\aura \times \bura}$.

\textbf{Right inclusion:} Let $W \in \tau_{\aura \times \bura}$. For every $(x,y) \in W$, $\aura(x) \times \bura(y) \subseteq W$ and $\aura(x) \times \bura(y) \in \tau_X \times \tau_Y$. So $W = \bigcup_{(x,y) \in W} \aura(x) \times \bura(y) \in \tau_X \times \tau_Y$.
\end{proof}

\begin{theorem}\label{thm:product-equality}
If both $\aura$ and $\bura$ are transitive, then
\[
\tau_{\aura \times \bura} = (\taua) \times (\taub).
\]
Moreover, $\aura \times \bura$ is transitive.
\end{theorem}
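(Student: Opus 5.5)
Since Theorem~\ref{thm:product-chain} already gives $(\taua) \times (\taub) \subseteq \tau_{\aura \times \bura}$ with no hypotheses, only the reverse inclusion and the transitivity claim need proof. I would prove transitivity of $\aura \times \bura$ first, because it is a one-line check and it yields the key lemma for the reverse inclusion. Suppose $(x',y') \in (\aura \times \bura)(x,y) = \aura(x) \times \bura(y)$. Then $x' \in \aura(x)$ and $y' \in \bura(y)$. Transitivity of $\aura$ and $\bura$ gives $\aura(x') \subseteq \aura(x)$ and $\bura(y') \subseteq \bura(y)$. Hence $(\aura \times \bura)(x',y') = \aura(x') \times \bura(y') \subseteq \aura(x) \times \bura(y)$, which is the transitivity of $\aura \times \bura$.

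For the reverse inclusion, I would use the fact recalled in Definition~\ref{def:transitive}: when $\aura$ is transitive, each $\aura(x)$ is $\aura$-open, and $\{\aura(x): x \in X\}$ is a base for $\taua$. Concretely, if $z \in \aura(x)$ then $\aura(z) \subseteq \aura(x)$, so $\aura(x) \in \taua$. The same holds for $\bura$, so $\bura(y) \in \taub$ for every $y \in Y$. Consequently $\aura(x) \times \bura(y)$ is a basic open set of $(\taua) \times (\taub)$.

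Now take $W \in \tau_{\aura \times \bura}$. For each $(x,y) \in W$ we have $\aura(x) \times \bura(y) \subseteq W$ by the definition of $\aura \times \bura$-openness, and $(x,y) \in \aura(x) \times \bura(y)$. Therefore
\[
W = \bigcup_{(x,y) \in W} \aura(x) \times \bura(y).
\]
This exhibits $W$ as a union of members of $(\taua) \times (\taub)$, so $W \in (\taua) \times (\taub)$. This is the same argument as the right inclusion in Theorem~\ref{thm:product-chain}, except that $\tau_X, \tau_Y$ are replaced by $\taua, \taub$. Combined with the left inclusion, this gives the claimed equality.

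The only step that needs care is the observation that the individual auras are themselves $\aura$-open under transitivity. Without it, the reverse inclusion fails; the non-transitive Example~\ref{ex:subspace-strict} shows the analogous failure for subspaces. Everything else is direct unwinding of the definitions.
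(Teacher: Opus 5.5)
Your proof is correct and is essentially the paper's: transitivity of each factor gives $\aura(x)\in\taua$ and $\bura(y)\in\taub$, so every $(\aura\times\bura)$-open $W$ is a union of the basic sets $\aura(x)\times\bura(y)$. The transitivity check for $\aura\times\bura$ is the same as the paper's, though it is not what feeds the reverse inclusion, contrary to your framing.

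One caution on your last paragraph: the argument needs transitivity, but the inclusion itself does not. Let $R(x)$ and $R'(y)$ be the smallest $\aura$-open and $\bura$-open sets containing $x$ and $y$, i.e.\ the points reachable by chains $x_{i+1}\in\aura(x_i)$, resp.\ $y_{i+1}\in\bura(y_i)$. Reflexivity lets the two chains be padded to equal length, so $R(x)\times R'(y)$ is the smallest $(\aura\times\bura)$-open set containing $(x,y)$. Hence $\tau_{\aura\times\bura}=(\taua)\times(\taub)$ holds for every pair of scope functions, unlike the subspace situation of Example~\ref{ex:subspace-strict}.
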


\begin{proof}
\textbf{Equality:} By Theorem~\ref{thm:product-chain}, it suffices to show $\tau_{\aura \times \bura} \subseteq (\taua) \times (\taub)$.

Let $W \in \tau_{\aura \times \bura}$. We show $W$ is open in $(\taua) \times (\taub)$. For $(x,y) \in W$, $\aura(x) \times \bura(y) \subseteq W$. Since $\aura$ is transitive, $\aura(x) \in \taua$; since $\bura$ is transitive, $\bura(y) \in \taub$. So $\aura(x) \times \bura(y)$ is a basic open set in $(\taua) \times (\taub)$ containing $(x,y)$ and contained in $W$. Hence $W$ is open in $(\taua) \times (\taub)$.

\textbf{Transitivity:} Let $(x', y') \in (\aura \times \bura)(x, y) = \aura(x) \times \bura(y)$. Then $x' \in \aura(x)$ and $y' \in \bura(y)$. By transitivity of $\aura$, $\aura(x') \subseteq \aura(x)$; by transitivity of $\bura$, $\bura(y') \subseteq \bura(y)$. Hence $(\aura \times \bura)(x', y') = \aura(x') \times \bura(y') \subseteq \aura(x) \times \bura(y) = (\aura \times \bura)(x,y)$.
\end{proof}

\begin{example}\label{ex:product-strict}
Let $X = \{a,b\}$ with $\tau_X = \powerset(X)$ and $\aura(a) = \{a,b\}$, $\aura(b) = \{a,b\}$. Then $\taua = \{\emptyset, X\}$. Also let $Y = \{1,2\}$ with $\tau_Y = \powerset(Y)$ and $\bura(1) = \{1,2\}$, $\bura(2) = \{1,2\}$. Then $\taub = \{\emptyset, Y\}$.

$(\taua) \times (\taub) = \{\emptyset, X \times Y\}$ (indiscrete on $X \times Y$).

$\tau_{\aura \times \bura}$: $(\aura \times \bura)(x,y) = X \times Y$ for all $(x,y)$. So the only $(\aura \times \bura)$-open sets are $\emptyset$ and $X \times Y$, i.e., $\tau_{\aura \times \bura} = \{\emptyset, X \times Y\} = (\taua) \times (\taub)$.

In this case equality holds (both aura functions are trivially transitive: $y \in \aura(x) = X$ implies $\aura(y) = X \subseteq X = \aura(x)$).
\end{example}

\begin{example}\label{ex:product-chain-strict}
Let $X = \{a,b,c\}$ with $\tau_X = \powerset(X)$ and $\aura(a) = \{a,b\}$, $\aura(b) = \{b,c\}$, $\aura(c) = \{a,c\}$ (non-transitive, from Example~\ref{ex:subspace-strict}). We showed $\taua = \{\emptyset, X\}$.

Let $Y = \{1,2\}$ with $\tau_Y = \powerset(Y)$ and $\bura(1) = \{1\}$, $\bura(2) = \{2\}$ (discrete, transitive). Then $\taub = \powerset(Y)$.

$(\taua) \times (\taub) = \{\emptyset, X\} \times \powerset(Y)$, which has basis $\{X \times \{1\}, X \times \{2\}, X \times Y\}$ plus $\emptyset$. So $(\taua) \times (\taub) = \{\emptyset, X \times \{1\}, X \times \{2\}, X \times Y\}$.

$\tau_{\aura \times \bura}$: We need to check which subsets $W$ of $X \times Y$ satisfy $(\aura \times \bura)(x,i) \subseteq W$ for all $(x,i) \in W$. $(\aura \times \bura)(a,1) = \{a,b\} \times \{1\}$, $(\aura \times \bura)(b,1) = \{b,c\} \times \{1\}$, $(\aura \times \bura)(c,1) = \{a,c\} \times \{1\}$, and similarly for $i=2$.

Consider $W = \{a,b\} \times \{1\}$. $(a,1) \in W$: $(\aura \times \bura)(a,1) = \{a,b\} \times \{1\} = W \subseteq W$. $(b,1) \in W$: $(\aura \times \bura)(b,1) = \{b,c\} \times \{1\} \not\subseteq W$ (since $(c,1) \notin W$). So $W \notin \tau_{\aura \times \bura}$.

Consider $W = X \times \{1\}$. For all $(x,1) \in W$: $(\aura \times \bura)(x,1) = \aura(x) \times \{1\} \subseteq X \times \{1\} = W$. So $W \in \tau_{\aura \times \bura}$.

Hence $\tau_{\aura \times \bura} = \{\emptyset, X \times \{1\}, X \times \{2\}, X \times Y\} = (\taua) \times (\taub)$, so equality still holds here (because $\aura$ is trivially ``resolved'' in the product).
\end{example}

\subsection{Tychonoff-Type Theorem}

\begin{theorem}[Tychonoff Theorem for Transitive Aura Spaces]\label{thm:tychonoff}
Let $(X, \tau_X, \aura)$ and $(Y, \tau_Y, \bura)$ be transitive $\aura$-spaces. Then $(X \times Y, \tau_X \times \tau_Y, \aura \times \bura)$ is $(\aura \times \bura)$-compact if and only if $(X, \tau_X, \aura)$ is $\aura$-compact and $(Y, \tau_Y, \bura)$ is $\bura$-compact.
\end{theorem}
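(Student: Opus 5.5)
The plan is to reduce the statement to the classical Tychonoff theorem for two factors, using Theorem~\ref{thm:product-equality} and Remark~\ref{rem:compact-taua}. Since $\aura$ and $\bura$ are transitive, Theorem~\ref{thm:product-equality} gives $\tau_{\aura \times \bura} = (\taua) \times (\taub)$. By Remark~\ref{rem:compact-taua}, $(\aura\times\bura)$-compactness of $X \times Y$ is exactly compactness of the topological space $(X \times Y, \tau_{\aura\times\bura})$. This is the same as compactness of the product space $(X,\taua) \times (Y,\taub)$. Likewise, $\aura$-compactness of $X$ is compactness of $(X,\taua)$, and $\bura$-compactness of $Y$ is compactness of $(Y,\taub)$. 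So the theorem becomes: a product of two topological spaces is compact if and only if each factor is compact.

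For the ``if'' direction I will invoke the finite Tychonoff theorem, or give the standard tube-lemma argument directly. Fix $x \in X$. The slice $\{x\} \times Y$ is covered by finitely many basic sets $U_i \times V_i$ from a given $\tau_{\aura\times\bura}$-open cover, by $\bura$-compactness of $Y$. Put $U_x = \bigcap_i U_i \in \taua$. Then finitely many $U_x$ cover $X$ by $\aura$-compactness, and the corresponding finitely many basic sets cover $X \times Y$. Since we may refine an arbitrary open cover to basic sets $\aura(x)\times\bura(y)$ (using transitivity, these lie in $\taua\times\taub$), this argument goes through verbatim.

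For the ``only if'' direction I will use the projection $\pi_X : X \times Y \to X$. Take $U \in \taua$. Then $\pi_X^{-1}(U) = U \times Y$, which lies in $(\taua)\times(\taub) = \tau_{\aura\times\bura}$ (even the left inclusion of Theorem~\ref{thm:product-chain} suffices here). So $\pi_X$ is $(\aura\times\bura)$-continuous in the sense of Definition~\ref{def:aura-continuity}. Applying Theorem~\ref{thm:continuous-preserves-compact} then shows that $X$ is $\aura$-compact, and symmetrically for $Y$.

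The main obstacle is the degenerate case where one factor is empty, since then the projection is not surjective. There $X \times Y = \emptyset$ is compact while the other factor need not be. I will handle this by assuming $X, Y \neq \emptyset$, which is implicit since $\aura$-spaces are built on points with auras, or by adding this remark explicitly. Apart from that, the only substantive input is the identification $\tau_{\aura\times\bura} = \taua\times\taub$ from Theorem~\ref{thm:product-equality}. This is where transitivity is genuinely used, and it is needed only for the ``if'' direction.
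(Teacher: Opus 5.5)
Your proposal is correct and follows the paper's own proof. Transitivity gives $\tau_{\aura\times\bura}=(\taua)\times(\taub)$ via Theorem~\ref{thm:product-equality}, the ``if'' direction is the classical finite Tychonoff theorem, and the ``only if'' direction uses continuity and surjectivity of the projections. Your extra observations are sound and slightly sharpen the paper: the ``only if'' direction needs only the left inclusion of Theorem~\ref{thm:product-chain} (as in Theorem~\ref{thm:projection}(a)), and the surjectivity of the projections, which the paper uses silently, genuinely requires both factors to be nonempty, since an empty factor makes $X\times Y$ trivially compact whatever the other factor is.
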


\begin{proof}
By Theorem~\ref{thm:product-equality}, $\tau_{\aura \times \bura} = (\taua) \times (\taub)$ when both $\aura$ and $\bura$ are transitive. So $(\aura \times \bura)$-compactness of $X \times Y$ is equivalent to compactness of $X \times Y$ in $(\taua) \times (\taub)$.

($\Leftarrow$): $X$ is $\aura$-compact means $(X, \taua)$ is compact. $Y$ is $\bura$-compact means $(Y, \taub)$ is compact. By the classical Tychonoff theorem, $(X \times Y, (\taua) \times (\taub))$ is compact.

($\Rightarrow$): If $X \times Y$ is compact in $(\taua) \times (\taub)$, then the projections $\pi_X: X \times Y \to (X, \taua)$ and $\pi_Y: X \times Y \to (Y, \taub)$ are continuous and surjective. The continuous image of a compact space is compact, so $(X, \taua)$ and $(Y, \taub)$ are compact.
\end{proof}

\begin{remark}\label{rem:tychonoff-general}
For non-transitive aura functions, we have the strict inclusion $(\taua) \times (\taub) \subsetneq \tau_{\aura \times \bura}$ in general. In this case, $(\aura \times \bura)$-compactness is harder to achieve (more open covers need to have finite subcovers). We state the general situation:

If $(X, \tau_X, \aura)$ is $\aura$-compact and $(Y, \tau_Y, \bura)$ is $\bura$-compact, then $(X \times Y, (\taua) \times (\taub))$ is compact by Tychonoff, but this does not immediately imply $(\aura \times \bura)$-compactness because $\tau_{\aura \times \bura}$ may be strictly finer.
\end{remark}

\begin{theorem}\label{thm:general-product-compact}
Let $(X, \tau_X, \aura)$ and $(Y, \tau_Y, \bura)$ be $\aura$-spaces. If $X$ is compact in $(X, \tau_X)$ and $Y$ is compact in $(Y, \tau_Y)$, then $(X \times Y, \tau_X \times \tau_Y, \aura \times \bura)$ is $(\aura \times \bura)$-compact.
\end{theorem}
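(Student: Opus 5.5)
The plan is to reduce the statement to the classical Tychonoff theorem for $(X \times Y, \tau_X \times \tau_Y)$ and then pass to the coarser topology $\tau_{\aura \times \bura}$ using the right-hand inclusion of Theorem~\ref{thm:product-chain}. No transitivity assumption is needed, because we never compare $\tau_{\aura \times \bura}$ with $(\taua) \times (\taub)$. We only use that $\tau_{\aura \times \bura}$ sits below the ordinary product topology.

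\textbf{Step 1.} Since $X$ is compact in $(X,\tau_X)$ and $Y$ is compact in $(Y,\tau_Y)$, the classical Tychonoff theorem for two factors shows that $X \times Y$ is compact in $(X \times Y, \tau_X \times \tau_Y)$. For two factors one can even avoid Tychonoff in full generality and use the tube lemma.

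\textbf{Step 2.} Apply Theorem~\ref{thm:compact-implies-a-compact} to the product aura space $(X \times Y, \tau_X \times \tau_Y, \aura \times \bura)$. This space is well defined by Proposition~\ref{prop:product-well-defined}, and its underlying topology is $\tau_X \times \tau_Y$. That theorem says a set compact in the underlying topology is compact in the associated aura topology, so $X \times Y$ is $(\aura\times\bura)$-compact.

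Concretely, the argument unwinds as follows. Take a cover $\{W_\alpha\}$ of $X \times Y$ by members of $\tau_{\aura\times\bura}$. By the right inclusion of Theorem~\ref{thm:product-chain}, every $W_\alpha$ lies in $\tau_X \times \tau_Y$. Hence $\{W_\alpha\}$ is an ordinary open cover, and by Step~1 it has a finite subcover.

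I expect no real obstacle here. The only point needing care is that Theorem~\ref{thm:compact-implies-a-compact} requires $\tau_{\aura\times\bura} \subseteq \tau_X \times \tau_Y$, which was established in the proof of Theorem~\ref{thm:product-chain} by writing each $W$ as $\bigcup_{(x,y)\in W} \aura(x)\times\bura(y)$. I will also remark, to match Remark~\ref{rem:tychonoff-general}, that this avoids the possibly strict gap between $(\taua)\times(\taub)$ and $\tau_{\aura\times\bura}$. The price is the stronger hypothesis of ordinary compactness of the factors.
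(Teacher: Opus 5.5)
Your proof is correct and is essentially the paper's argument: apply the classical Tychonoff theorem to $(X\times Y,\tau_X\times\tau_Y)$, then use the inclusion $\tau_{\aura\times\bura}\subseteq\tau_X\times\tau_Y$ from Theorem~\ref{thm:product-chain}, so that every $(\aura\times\bura)$-open cover is an ordinary open cover and has a finite subcover. Phrasing Step 2 through Theorem~\ref{thm:compact-implies-a-compact} is only a repackaging of that same inclusion, which holds for any aura space.
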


\begin{proof}
By the classical Tychonoff theorem, $X \times Y$ is compact in $\tau_X \times \tau_Y$. Since $\tau_{\aura \times \bura} \subseteq \tau_X \times \tau_Y$ (Theorem~\ref{thm:product-chain}), every $(\aura \times \bura)$-open cover is a $(\tau_X \times \tau_Y)$-open cover and hence has a finite subcover.
\end{proof}

\begin{theorem}[Projection Theorem]\label{thm:projection}
Let $(X \times Y, \tau_X \times \tau_Y, \aura \times \bura)$ be a product aura space. The projection maps $\pi_X: (X \times Y, \tau_{\aura \times \bura}) \to (X, \taua)$ and $\pi_Y: (X \times Y, \tau_{\aura \times \bura}) \to (Y, \taub)$ are continuous. Consequently:
\begin{enumerate}[label=(\alph*)]
    \item If $X \times Y$ is $(\aura \times \bura)$-compact, then $X$ is $\aura$-compact and $Y$ is $\bura$-compact.
    \item If $X \times Y$ is $(\aura \times \bura)$-connected, then $X$ is $\aura$-connected and $Y$ is $\bura$-connected.
\end{enumerate}
\end{theorem}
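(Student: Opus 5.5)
The plan is to show that $\pi_X$ is continuous from $(X\times Y,\tau_{\aura\times\bura})$ to $(X,\taua)$, and likewise for $\pi_Y$. Parts (a) and (b) then follow from the standard facts that continuous images of compact (respectively connected) spaces are compact (respectively connected). These facts are exactly the arguments of Theorem~\ref{thm:continuous-preserves-compact} and Theorem~\ref{thm:continuous-connected}, now run with the topologies $\tau_{\aura\times\bura}$ and $\taua$.

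\textbf{Continuity.} Take $U\in\taua$. We want $\pi_X^{-1}(U)=U\times Y$ to lie in $\tau_{\aura\times\bura}$. Two routes are available. The first is via Theorem~\ref{thm:product-chain}: $U\times Y$ is a basic open set of $(\taua)\times(\taub)$, because $Y\in\taub$ (every space is $\aura$-open in itself, since $\bura(y)\subseteq Y$). The left inclusion of the chain then places $U\times Y$ in $\tau_{\aura\times\bura}$. The second route checks this directly: for $(x,y)\in U\times Y$ we have $x\in U$, so $\aura(x)\subseteq U$ because $U$ is $\aura$-open. Hence $(\aura\times\bura)(x,y)=\aura(x)\times\bura(y)\subseteq U\times Y$. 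The argument for $\pi_Y$ is symmetric, using $V\in\taub$ and $X\times V$.

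\textbf{Consequences.} For (a), take a $\taua$-open cover $\{U_\alpha\}$ of $X$. Pull it back to the cover $\{U_\alpha\times Y\}$ of $X\times Y$, which is $(\aura\times\bura)$-open by the continuity step. Extract a finite subcover, and project it back to $X$; this works because $\pi_X$ is surjective. The surjectivity needs $Y\neq\emptyset$, which is automatic when $X\times Y$ is nonempty. If a factor is empty, the statement is trivial or vacuous. For (b), suppose $(U,V)$ is an $\aura$-separation of $X$. Then $(U\times Y,\,V\times Y)$ is an $(\aura\times\bura)$-separation of $X\times Y$: the two sets are open by the continuity step, disjoint, cover $X\times Y$, and are nonempty since $Y\neq\emptyset$. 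This contradicts $(\aura\times\bura)$-connectedness. The same argument with $\pi_Y$ handles $Y$.

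No step is a real obstacle. The only point requiring care is confirming that preimages of $\aura$-open sets under the projection are $(\aura\times\bura)$-open without any transitivity hypothesis. The direct computation above settles this, since it uses only the defining property of $U\in\taua$ at each point. The degenerate empty-factor case is handled by the nonemptiness remark.
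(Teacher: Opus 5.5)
Your proposal is correct and matches the paper's proof: continuity of $\pi_X$ (and symmetrically $\pi_Y$) comes from the same pointwise check $\aura(x)\times\bura(y)\subseteq U\times Y$, and (a) and (b) then follow from the standard facts about continuous surjective images, which you spell out via pulled-back covers and the separation $(U\times Y,V\times Y)$. One correction to your side remark: the empty-factor case is not trivial or vacuous but a genuine failure, because if $Y=\emptyset$ then $X\times Y=\emptyset$ is both $(\aura\times\bura)$-compact and $(\aura\times\bura)$-connected while $X$ need be neither, so both factors must be assumed nonempty (the paper leaves this assumption tacit).
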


\begin{proof}
For continuity, let $U \in \taua$. Then $\pi_X^{-1}(U) = U \times Y$. For $(x,y) \in U \times Y$: $(\aura \times \bura)(x,y) = \aura(x) \times \bura(y) \subseteq U \times Y$ (since $\aura(x) \subseteq U$ and $\bura(y) \subseteq Y$). So $U \times Y \in \tau_{\aura \times \bura}$. Similarly for $\pi_Y$.

(a) and (b) follow because continuous images of compact (resp.\ connected) spaces are compact (resp.\ connected).
\end{proof}

\begin{theorem}[Product Connectedness]\label{thm:product-connected}
If $(X, \tau_X, \aura)$ is $\aura$-connected and $(Y, \tau_Y, \bura)$ is $\bura$-connected, then $(X \times Y, \tau_X \times \tau_Y, \aura \times \bura)$ is $(\aura \times \bura)$-connected.
\end{theorem}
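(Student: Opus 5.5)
The plan is to argue directly with clopen sets of $\tau_{\aura \times \bura}$, and not through the product topology $(\taua) \times (\taub)$. Theorem~\ref{thm:product-chain} only gives $(\taua) \times (\taub) \subseteq \tau_{\aura \times \bura}$, and the finer topology could a priori admit more separations. So the classical product-connectedness theorem cannot simply be quoted. I will use the criterion of Theorem~\ref{thm:a-connected-characterizations}(ii): I show that every $W \subseteq X \times Y$ that is both $(\aura \times \bura)$-open and $(\aura \times \bura)$-closed is $\emptyset$ or $X \times Y$.

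The key step is that the slice inclusions are continuous. Fix $y \in Y$ and consider $i_y : (X, \taua) \to (X \times Y, \tau_{\aura \times \bura})$, $x \mapsto (x,y)$. For any $W \in \tau_{\aura \times \bura}$, put $W_y = i_y^{-1}(W) = \{x : (x,y) \in W\}$. If $x \in W_y$, then $(\aura \times \bura)(x,y) = \aura(x) \times \bura(y) \subseteq W$. Since $y \in \bura(y)$, this gives $\aura(x) \times \{y\} \subseteq W$, that is, $\aura(x) \subseteq W_y$. Hence $W_y \in \taua$ directly from the definition of $\aura$-open sets.

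The same computation applies to the complement $(X \times Y) \setminus W$, which is also $(\aura \times \bura)$-open. Its slice is $X \setminus W_y$, so $W_y$ is $\aura$-clopen. By $\aura$-connectedness of $X$ and Theorem~\ref{thm:a-connected-characterizations}, $W_y \in \{\emptyset, X\}$ for every $y$. Symmetrically, the vertical slice $W^x = \{y : (x,y) \in W\}$ is $\bura$-clopen, so $W^x \in \{\emptyset, Y\}$ for every $x$.

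To conclude, suppose $W \neq \emptyset$ and pick $(x_0,y_0) \in W$. Then $W^{x_0} \neq \emptyset$, so $W^{x_0} = Y$. Thus for each $y \in Y$ we have $x_0 \in W_y$, so $W_y = X$. It follows that $W = X \times Y$, and $X \times Y$ has no $(\aura \times \bura)$-separation.

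The only real obstacle is the one flagged at the start: $\tau_{\aura \times \bura}$ may be strictly finer than $(\taua) \times (\taub)$, so the classical theorem does not apply verbatim. The slice argument avoids this entirely. It uses only that basic scope sets are rectangles containing the point, and it needs no transitivity.
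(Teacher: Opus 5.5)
Your proof is correct and follows essentially the same route as the paper. Both arguments rest on the continuity of the slice inclusions into $(X \times Y, \tau_{\aura\times\bura})$: you use $y \in \bura(y)$ to get $\aura(x) \subseteq W_y$, just as the paper does, and then run the cross-shaped argument through $(x_0,y)$. Your clopen-slice formulation with $W_y$ and $W^x$ is simply the preimage version of the paper's step that each connected slice lies entirely in one part of the separation.
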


\begin{proof}
Since $(\taua) \times (\taub) \subseteq \tau_{\aura \times \bura}$, any $(\aura \times \bura)$-separation $(U, V)$ of $X \times Y$ (with $U, V \in \tau_{\aura \times \bura}$) also gives sets in $\tau_X \times \tau_Y$. We use the standard product connectedness argument adapted to the aura setting.

Suppose $X \times Y = U \cup V$ is an $(\aura \times \bura)$-separation. Fix $a \in X$. The ``slice'' $\{a\} \times Y$ inherits a subspace aura, and the inclusion $\iota_a: (Y, \taub) \to (X \times Y, \tau_{\aura \times \bura})$ defined by $\iota_a(y) = (a,y)$ is continuous (since $\iota_a^{-1}(W) = \{y : (a,y) \in W\}$ is $\bura$-open for $W \in \tau_{\aura \times \bura}$, as $\bura(y) \subseteq \{y' : (a,y') \in W\}$ whenever $(a,y) \in W$ and $\aura(a) \times \bura(y) \subseteq W$).

Since $Y$ is $\bura$-connected, $\iota_a(Y) = \{a\} \times Y$ is connected in $\tau_{\aura \times \bura}$. So $\{a\} \times Y$ lies entirely in $U$ or $V$. Similarly, for each $b \in Y$, $X \times \{b\}$ is connected. The standard lattice argument now applies: for any $(x_1, y_1), (x_2, y_2) \in X \times Y$, the path $(x_1, y_1) \to (x_1, y_2) \to (x_2, y_2)$ through connected slices shows they lie in the same part. Hence no separation exists.
\end{proof}

\subsection{Finite Products}

The results above extend to finite products by induction.

\begin{corollary}\label{cor:finite-product}
Let $(X_1, \tau_1, \aura_1), \ldots, (X_n, \tau_n, \aura_n)$ be $\aura$-spaces. Define the product scope function $\aura_1 \times \cdots \times \aura_n$ componentwise. Then:
\begin{enumerate}[label=(\alph*)]
    \item If all $\aura_i$ are transitive, then $\tau_{\aura_1 \times \cdots \times \aura_n} = (\tau_{\aura_1}) \times \cdots \times (\tau_{\aura_n})$.
    \item (Tychonoff for finite products) If all $\aura_i$ are transitive, then $\prod X_i$ is $(\aura_1 \times \cdots \times \aura_n)$-compact iff each $X_i$ is $\aura_i$-compact.
    \item $\prod X_i$ is $(\aura_1 \times \cdots \times \aura_n)$-connected iff each $X_i$ is $\aura_i$-connected.
\end{enumerate}
\end{corollary}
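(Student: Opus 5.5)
The plan is to prove all three parts by induction on $n$, using the binary results already established and the identification $X_1 \times \cdots \times X_n = (X_1 \times \cdots \times X_{n-1}) \times X_n$. First I check that this identification respects the aura structure. The product scope function satisfies
\[
(\aura_1 \times \cdots \times \aura_n)(x_1,\ldots,x_n) = \bigl(\aura_1(x_1)\times\cdots\times\aura_{n-1}(x_{n-1})\bigr)\times \aura_n(x_n),
\]
so it agrees with $(\aura_1\times\cdots\times\aura_{n-1})\times\aura_n$ under the canonical bijection. The classical product topology $\tau_1\times\cdots\times\tau_n$ corresponds likewise. The set $\tau_{\aura_1\times\cdots\times\aura_n}$ is defined only from the scope function, so it is carried over unchanged. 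The base case $n=1$ is trivial, and $n=2$ is given by Theorems~\ref{thm:product-equality}, \ref{thm:tychonoff}, \ref{thm:projection} and \ref{thm:product-connected}.

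\textbf{Part (a).} Assume the result for $n-1$ transitive factors. Theorem~\ref{thm:product-equality} shows that $\aura_1\times\cdots\times\aura_{n-1}$ is again transitive; this follows by induction from its "moreover" clause. So the binary theorem applies to the pair consisting of $X_1\times\cdots\times X_{n-1}$ and $X_n$, giving
\[
\tau_{\aura_1\times\cdots\times\aura_n} = \tau_{\aura_1\times\cdots\times\aura_{n-1}} \times \tau_{\aura_n}.
\]
By the induction hypothesis the first factor equals $(\tau_{\aura_1})\times\cdots\times(\tau_{\aura_{n-1}})$. Associativity of the classical product topology then gives (a).

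\textbf{Part (b).} Under transitivity, (a) identifies $(\aura_1\times\cdots\times\aura_n)$-compactness with compactness of $\prod X_i$ in $(\tau_{\aura_1})\times\cdots\times(\tau_{\aura_n})$. The classical finite Tychonoff theorem gives the direction "each factor compact implies the product compact". The converse uses the continuous surjective projections, as in Theorem~\ref{thm:tychonoff}. Alternatively, one can induct directly using Theorem~\ref{thm:tychonoff} together with the transitivity of the partial products.

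\textbf{Part (c).} This part assumes no transitivity, so I avoid (a). For the direction "product connected implies each factor connected", I extend Theorem~\ref{thm:projection}. For $U\in\tau_{\aura_i}$, the preimage $\pi_i^{-1}(U)$ is $(\aura_1\times\cdots\times\aura_n)$-open, since each product aura is contained in it. Continuous images of connected spaces are connected, which gives the result.

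For the converse I induct with Theorem~\ref{thm:product-connected}. If $X_1\times\cdots\times X_{n-1}$ is connected by the induction hypothesis and $X_n$ is $\aura_n$-connected, then the binary product is connected. This holds because the binary theorem imposes no transitivity hypothesis and only needs the factors to be aura spaces.

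The main obstacle is the bookkeeping behind the inductive step. I must verify that the $(n-1)$-fold product is a legitimate aura space, namely that its scope function takes values in the classical product topology, as in Proposition~\ref{prop:product-well-defined}. I must also check that the canonical bijection is an isomorphism of aura spaces, so that $\tau_{\aura\times\bura}$-openness transfers exactly. Once the displayed identity above is in hand, every step reduces to the binary results.
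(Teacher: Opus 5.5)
Your proposal is correct and follows the paper's route: induction on $n$ via the identification $X_1\times\cdots\times X_n=(X_1\times\cdots\times X_{n-1})\times X_n$, which reduces each part to Theorems~\ref{thm:product-equality}, \ref{thm:tychonoff} and \ref{thm:product-connected}, with transitivity of the partial products supplied by the ``moreover'' clause of Theorem~\ref{thm:product-equality}. You also appeal explicitly to the $n$-fold version of Theorem~\ref{thm:projection} for the ``only if'' half of (c); this fills a step the paper's one-line proof leaves implicit, since Theorem~\ref{thm:product-connected} by itself gives only the ``if'' direction.
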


\begin{proof}
By induction on $n$, applying Theorems~\ref{thm:product-equality}, \ref{thm:tychonoff}, and \ref{thm:product-connected}.
\end{proof}

\section{Conclusion}

Let us briefly take stock of what has been obtained. Starting from the aura space $(X, \tau, \aura)$, we defined five compactness-type properties and showed that classical compactness implies $\aura$-compactness, which in turn implies countable $\aura$-compactness; all implications are strict (Examples in Section~3). The sequential picture is particularly clean when $\aura$ is transitive: a sequence converges to $x$ in $\taua$ precisely when it is eventually inside $\aura(x)$. On the standard-result side, $\aura$-compact subsets of $\aura$-$T_2$ spaces are $\aura$-closed, and $\aura$-continuous surjections preserve $\aura$-compactness.

For connectivity, the coarseness of $\taua$ works in favour of connectedness. We introduced $\aura$-connected, $\aura$-path connected, and $\aura$-locally connected spaces and proved that $\aura$-components are $\aura$-closed (and $\aura$-open in the locally connected case). The product theory turned out to be quite satisfying: the inclusion chain $(\taua) \times (\taub) \subseteq \tau_{\aura \times \bura} \subseteq \tau_X \times \tau_Y$ holds in general, and equality on the left is guaranteed by transitivity. A Tychonoff-type theorem was proved for transitive aura spaces. Product connectedness, on the other hand, required no transitivity hypothesis at all.

Some natural continuations are left open. Extending the Tychonoff theorem to arbitrary (infinite) products would require dealing with the axiom of choice in the aura setting. $\aura$-paracompactness and $\aura$-metacompactness have not been touched. Perhaps most interesting is the problem of combining the scope function with an ideal on $X$ and studying the resulting local function and topology---a direction that we take up in a forthcoming paper \cite{Acikgoz2026aura3}. Applications of $\aura$-compactness to digital topology and image processing, as well as categorical aspects of the category $\mathbf{Aura}$ of aura spaces, also remain to be explored.

\section*{Conflict of Interest}

The author declares no conflict of interest.

\section*{Data Availability}

No data was used for the research described in the article.

\end{document}